\newtheorem{theorem}{Theorem}[section]
\newtheorem{definition}[theorem]{Definition}
\newtheorem{proposition}[theorem]{Proposition}
\newtheorem{lemma}[theorem]{Lemma}
\newtheorem{corollary}[theorem]{Corollary}
\numberwithin{equation}{section}
\renewcommand{\gg}{\gamma}
\newcommand{\bR}{{\mathbb{R}}}
\newcommand{\rest}{\restriction}
\newcommand{\la}{\langle}
\newcommand{\ra}{\rangle}
\newcommand{\cp}{{\rm cp }}
\newcommand{\cf}{{\rm cf}}
\def\k{\kappa}
\def\a{\alpha}
\def\b{\beta}
\def\d{\delta}
\def\l{\lambda}
\newcommand{\card}[1]{{\vert #1 \vert} }
\renewcommand{\models}{\vDash}
\newcommand{\powerset}{{\cal P}}
\newcommand{\insegeq}{\trianglelefteq}
\newcommand{\inseg}{\triangleleft}
\def\P{{\mathcal{P} }}
\def\W{{\mathcal{W} }}
\def\Q{{\mathcal{ Q}}}
\def\K{{\mathcal{ K}}}
\def\R{{\mathcal R}}
\def\H{{\rm{HOD}}}
\def\M{{\mathcal{M}}}
\def\N{{\mathcal{N}}}
\def\T {{\mathcal{T}}}
\def\U{{\mathcal{U}}}
\def\S{{\mathcal{S}}}
\def\VT{{\vec{\mathcal{T}}}}
\newcommand{\rthm}[1]{Theorem~\ref{#1}}
\newcommand{\rlem}[1]{Lemma~\ref{#1}}
\newcommand{\rdef}[1]{Definition~\ref{#1}}
\begin{document}
\title{Non-tame mice from tame failures of the unique branch hypothesis}
\date{\today}
\author{Grigor Sargsyan\\
Department of Mathematics\\
Rutgers University,\\
New Brunswick, NJ, 08854 USA\\
http://math.rutgers.edu/$\sim$gs481\\
grigor@math.rutgers.edu\\
Nam Trang\\
Department of Mathematics\\
University of California\\
Berkeley, CA, 94720 USA\\
http://math.berkeley.edu/$\sim$namtrang\\
namtrang@math.berkeley.edu.}
\maketitle
\begin{abstract}
In this paper, we show that the failure of the unique branch hypothesis (UBH) for tame trees (see \rdef{tame iteration tree}) implies that in some homogenous generic extension of $V$ there is a transitive model $M$ containing $Ord \cup \mathbb{R}$ such that $M\vDash AD^+ + \Theta > \theta_0$. In particular, this implies the existence (in $V$) of a non-tame mouse. The results of this paper significantly extend Steel's earlier results from \cite{steel2002core} for tame trees. 
\end{abstract}
\thispagestyle{empty}

In this paper, we establish, using the core model induction, a lower bound for certain failures of the \textit{Unique Branch Hypothesis}, (UBH), which is the statement that every iteration tree that acts on $V$ has at most one cofinal well-founded branch. The following is our main theorem. Tame trees are defined in \rdef{tame iteration tree}: roughly speaking, these are the trees in which the critical point of any branch embedding is above a strong cardinal which reflects strong cardinals.

\begin{theorem}[Main Theorem]\label{main theorem} Suppose there is a proper class of strong cardinals and UBH fails for tame trees\footnote{For us, all iteration trees are non-overlapping}. Then in a set generic extension of $V$, there is a transitive inner model $M$ such that $Ord, \mathbb{R}\subseteq M$ and $M\models AD^++\theta_0<\Theta$. In particular, there is a non-tame mouse. 
\end{theorem}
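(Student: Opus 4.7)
The strategy I would pursue is a core model induction, assuming for contradiction that no model $M$ of the required sort exists in any set generic extension of $V$. The aim is to use the two distinct cofinal branches $b \neq c$ of a tame tree $\mathcal{T}$ witnessing the failure of UBH as the extra ingredient that forces the induction, which would otherwise stall at $L(\mathbb{R}) \models \mathrm{AD}^+ + \Theta = \theta_0$, to continue past the first step of the Solovay sequence.

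First I would fix such a witnessing $\mathcal{T}$, $b$, $c$. By the tameness hypothesis, both critical points $\mathrm{cp}(i_b^{\mathcal{T}})$ and $\mathrm{cp}(i_c^{\mathcal{T}})$ lie above some strong cardinal $\kappa$ which reflects the class of strong cardinals; I would work above $\kappa$ and use the proper class of strongs to fuel the background extender constructions throughout the induction. The induction itself then produces, stage by stage along the Wadge hierarchy of a would-be $\mathrm{AD}^+$ model, mouse operators and tame hod pairs capturing each initial segment of pointclasses, together with their derived models in homogeneous generic extensions. The arguments of \cite{steel2002core} handle the bottom of this induction, yielding from the failure of UBH a model of $\mathrm{AD}^+$ --- equivalently, a non-tame mouse --- below $L(\mathbb{R})$.

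The genuinely new content is extending the induction past $L(\mathbb{R})$ to a model satisfying $\theta_0 < \Theta$. The key step runs as follows: suppose the induction has reached a stage whose derived model $N$ satisfies $\mathrm{AD}^+ + \Theta = \theta_0$; then standard HOD-analysis inside $N$ produces a tame hod premouse $\mathcal{P}$ together with a branch-condensing strategy for $\mathcal{P}$ that generically interprets back into $V$. Applying this strategy to initial segments of $\mathcal{T}$ and using the reflecting strong $\kappa$ to background-certify the relevant copies, one extracts predicted branches $b^{\ast}, c^{\ast}$ of $\mathcal{T}$. Tameness of $\mathcal{T}$ is precisely what makes this prediction go through: the critical points lie above $\kappa$, so the branches are pinned down by the behavior of the strategy on the short initial part of $\mathcal{T}$ below $\kappa$, where uniqueness is already known. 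The forced equality $b = b^{\ast} = c^{\ast} = c$ contradicts $b \neq c$, so the induction cannot terminate at $\Theta = \theta_0$ and must produce the desired $M$ in a homogeneous extension; the non-tame mouse conclusion then follows from the usual translation between models of $\mathrm{AD}^+ + \theta_0 < \Theta$ and non-tame mice.

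The principal obstacle is the successor step of the induction at precisely the moment the hierarchy crosses $\Theta = \theta_0$. One must (i) pin down the correct hod pair to attach to this stage --- tame hod pairs alone no longer suffice and one must construct the first fully branch-condensing, normally iterable hod pair whose derived model genuinely witnesses $\theta_0 < \Theta$; (ii) verify that its iteration strategy is guided by the mouse operators from the preceding stages, so that it interprets into $V$; and (iii) combine tameness of $\mathcal{T}$ with branch condensation to simultaneously realize $b$ and $c$ as generic branches of a copied tree in the derived model. Step (iii) is the exact place at which the argument breaks for non-tame $\mathcal{T}$, and it is what ties the strength of the conclusion $\theta_0 < \Theta$ to the tameness restriction on the witnessing tree.
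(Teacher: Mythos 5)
Your overall framework --- a core model induction seeded by the two branches $b\neq c$ --- matches the paper, but the mechanism by which you propose to extract strength from $\neg$UBH is wrong, and the actual argument runs through a very different place.

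You propose that once the induction reaches a stage with a branch-condensing hod pair $(\mathcal{P},\Sigma)$ whose derived model satisfies $\Theta=\theta_0$, applying $\Sigma$ to ``initial segments of $\mathcal{T}$'' would predict unique branches $b^*,c^*$ and force $b=c$; and you justify this by saying tameness means the branches are ``pinned down by the behavior of the strategy on the short initial part of $\mathcal{T}$ below $\kappa$.'' This reading of tameness is backwards. Tameness says only that every extender used in $\mathcal{T}$ has critical point above some strong cardinal that reflects strongs; it says nothing about the branches being determined below $\kappa$. Indeed, since all critical points are above the relevant $\kappa_2$, the tree does essentially nothing below $\kappa_2$ --- there is no informative ``short initial part'' --- and the whole content of a failure of UBH is precisely that the cofinal branch is \emph{not} determined. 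Moreover, $\Sigma$ is a strategy for a small hod premouse $\mathcal{P}$, and you give no bridge from $\Sigma$ to an iteration strategy for (a phalanx of initial segments of) $V$ that would apply to $\mathcal{T}$; absent that, there is no coherent sense in which $\Sigma$ ``extracts predicted branches of $\mathcal{T}$.''

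What the paper actually does is separate the argument into a Key Lemma and a self-contained core model induction (\rthm{main technical theorem}) that makes no reference to $\mathcal{T}$. The Key Lemma asserts that Proj$(\kappa_1,\kappa_2,\Sigma)$ holds for all relevant hod pairs; its content is that $\mathcal{M}_1^{\#,F}$ exists and is suitably iterable for every cmi operator $F$. The proof is exactly where the two branches enter: if $\mathcal{M}_1^{\#,F}$ failed to exist, one builds the $F$-closed core model $K^F$ with extenders of critical point $>\kappa_2$, shows (\rlem{lifting cmi operators: mice}, which uses a reflection argument and an appeal to Neeman's copying lemma) that $F$ lifts coherently to both $M_b$ and $M_c$, and then imports Steel's arguments from \cite{steel2002core} (\rlem{UBHutility} and Theorem 2.2 of \cite{IT}) to conclude that $Lp^{\pi_b(F)}(K^{\pi_b(F)}|\delta)\models\text{``}\delta$ is Woodin'', contradicting $1$-smallness of $K^F$. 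So $\neg$UBH is used to rule out a stalling core model, not to collapse $b$ and $c$. Once Proj is in hand, \rthm{main technical theorem} does the escalation past $\Theta=\theta_0$ entirely internally: if $\mathcal{S}^\lambda_{\mu,g}\models\Theta=\theta_0$ one extracts, from $\mathcal{M}_\infty$ of that model together with a qsjs obtained from a strongness embedding $j$, a fullness-preserving strategy and then, via the Ketchersid-style precipitous-ideal argument, a tail with branch condensation, whence $L(\Lambda_{\mathcal{Q},\vec{\mathcal{T}}},\mathbb{R})\models\theta_0<\Theta$. Nothing in that step touches $\mathcal{T}$, $b$, or $c$. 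You would need to replace your step (iii) entirely with the Key Lemma computation before the rest of your outline has a chance of closing.
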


UBH was first introduced by Martin and Steel in \cite{IT}. Towards showing UBH, Neeman, in \cite{Neeman}, showed that a certain weakening of UBH called cUBH holds provided there are no non-bland mice\footnote{We will not use this terminology.}. However, in \cite{woodin2010suitable}, Woodin showed that in the presence of supercompact cardinals UBH can fail for tame trees. It is, however, still an important open problem whether UBH holds for trees that use extenders that are $2^{\aleph_0}$-closed in the models that they are chosen from. A positive resolution of this problem will lead to the resolution of \textit{the inner model problem} for superstrong cardinals and beyond. It is worth remarking that the aforementioned form of UBH for tame trees will also lead to the resolution of the inner model problem for superstrong cardinals and beyond. Our work can be viewed as an attempt to prove UBH for tame trees by showing that its failure has a strong consistency strength.

In this direction, in \cite{steel2002core}, Steel showed that the failure of UBH for normal trees implies that there is an inner model with infinitely many Woodin cardinals. If in addition UBH fails for some tree $\T$ such that $\d(\T)$ is in the image of two branch embeddings witnessing the failure of UBH for $\T$ then Steel obtained an inner model with a strong cardinal which is a limit of Woodin cardinals. For tame trees (which include a class of examples constructed by Woodin in \cite{woodin2010suitable}), the Main Theorem considerably strengthens the aforementioned result of Steel and because the proof presented here is via the core model induction, we expect that it will yield much more: we believe that our proof, coupled with arguments from \cite{ATHM}, will give the existence of a transitive inner model $M$ such that $Ord, \mathbb{R}\subseteq M$ and $M\models ``AD_{\bR}+\Theta$ is regular". However, we still do not know if an arbitrary failure of UBH implies the existence of a non-tame mouse. Various arguments presented in this paper resemble the arguments given in \cite{StrengthPFA1} and \cite{PFA}, and some familiarity with  those articles will be useful. 

The first author's work was supported by NSF Grant No DMS-1201348. Part of this paper was written while the second author was visiting the first author who was a Leibniz Fellow at the Mathematisches Forschungsinstitut Oberwolfach. Finally, the authors would like to express their gratitude to their kind Rutgers colleagues, Lisa Carbone and Konstantin Mischaikow, for hosting both authors during the hurricane Sandy. The paper was finished while both authors were sheltered at their house.

\section{Preliminaries}

In this paper, we will need to make use of the material presented in Section 1 of \cite{StrengthPFA1},  most of which, especially Section 1.1, carries over to the hybrid context by just changing the word ``mouse" with ``hybrid mouse". Because of this, we will only introduce a few main notions and will use Section 1 of \cite{StrengthPFA1} as our main background material. In particular, we assume that the reader has already translated the material of Section 1.1 of \cite{StrengthPFA1} into the language of hybrid mice. 

\subsection{Stacking mice}

Following the notation of Section 1.3 of \cite{StrengthPFA1}, we fix some uncountable cardinal $\lambda$ and assume $ZF$. Notice that any function $f:H_{\l}\rightarrow H_{\l}$ can be naturally coded by a subset of $\powerset(\cup_{\k<\l}\powerset(\kappa))$. We then let $Code^*_\lambda:  H_{\lambda}^{H_{\l}}\rightarrow \powerset(\cup_{\k<\l}\powerset(\kappa))$ be one such coding. If $\l=\omega_1$ then we just write $Code^*$. Because for $\a\leq \l$, any $(\a, \lambda)$-iteration strategy for a hybrid premouse of size $<\l$ is in $H_{\lambda}^{H_{\l}}$, we have that any such strategy is in the domain of $Code^*_\l$. 

Suppose $\Lambda\in dom(Code^*_\l)$ is a strategy with hull condensation and $\mu\leq \l$. Recall that we say $F$ is $(\mu, \Lambda)$-mouse operator if for some $X\in H_\l$ and formula $\phi$ in the language of $\Lambda$-mice, whenever $Y$ is such that $X\in Y$, $F(Y)$ is the minimal $\mu$-iterable $\Lambda$-mouse satisfying $\phi[Y]$. We let $\M_F$ be such that $F$ is defined on all $Y$ containing $\M_F$. 

We then let $Code_\l$ be $Code^*_\l$ restricted to $F\in dom(Code^*_\l)$ which are defined by the following recursion. 
\begin{enumerate}
\item for some $\a\leq \l$, $F$ is a $(\a, \l)$-iteration strategy with hull condensation\footnote{In this case as well as in cases below $\a=0$ is allowed.},
\item for some $\a\leq \l$ and for some $(\a, \l)$-iteration strategy $\Lambda\in dom(Code^*_\l)$ with hull condensation, $F$ is a $(\l, \Lambda)$-mouse operator,
\item for some $\a\leq \l$, for some $(\a, \l)$-iteration strategy $\Lambda\in dom(Code^*_\l)$ with hull condensation, for some  $(\l, \Lambda)$-mouse operator $G\in dom(Code^*_\l)$ and for some $\b\leq \l$, $F$ is a $(\b, \Lambda)$-iteration strategy with hull condensation for some $G$-mouse $\M\in H_\l$.
\end{enumerate}
Given an $F\in Code_\l$ we let $\M_F$ be, in the case $F$ is an iteration strategy, the structure that $F$ iterates and, in the case $F$ is a mouse operator, the base of the cone on which $F$ is defined. 

Let $\P\in H_{\l}$ be a hybrid premouse and for some $\a\leq \l$, let $\Sigma$ be $(\a, \l)$-iteration strategy with hull condensation for $\P$. Suppose now that $\Gamma\subseteq \powerset(\cup_{\k<\l}\powerset(\kappa))$ is such that $Code_\l(\Sigma)\in \Gamma$. Given a $\Sigma$-premouse $\M$, we say $\M$ is $\Gamma$\textit{-iterable} if $\card{\M}< \l$ and $\M$ has a $\lambda$-iteration strategy  (or $(\a, \lambda)$-iteration strategy for some $\a\leq \l$) $\Lambda$ such that $Code_\l(\Lambda)\in\Gamma$\footnote{Recall that iteration strategy for a $\Sigma$-mouse must respect $\Sigma$. In particular, all $\Lambda$-iterates of $\M$ are $\Sigma$-premice.}. We let $Mice^{\Gamma, \Sigma}$ be the set of $\Sigma$-premice that are $\Gamma$-iterable.

\begin{definition}\label{countably iterability} Given a $\Sigma$-premouse $\M\in H_\l$, we say $\M$ is countably $\a$-iterable if whenever $\pi:\N\rightarrow \M$ is a countable submodel of $\M$, $\N$, as a $\Sigma^\pi$-mouse, is $\a$-iterable. When $\a=\omega_1+1$ then we just say that $\M$ is countably iterable. We say $\M$ is countably $\Gamma$-iterable if whenever $\pi$ and $\N$ are as above, $\N$ is $\Gamma$-iterable. 
\end{definition}

Suppose $\M$ is a $\Sigma$-premouse. We  then let $o(\M)=Ord\cap \M$. We also let $\M||\xi$ be $\M$ cutoff at $\xi$, i.e., we keep the predicate indexed at $\xi$. We let $\M|\xi$ be $\M||\xi$ without the last predicate. We say $\xi$ is a \textit{cutpoint} of $\M$ if there is no extender $E$ on $\M$ such that $\xi\in(\cp(E), lh(E)]$. We say $\xi$ is a \textit{strong cutpoint} if there is no $E$ on $\M$ such that $\xi\in[\cp(E), lh(E)]$. We say $\eta<o(\M)$ is \textit{overlapped} in $\M$ if $\eta$ isn't a cutpoint of $\M$. Given $\eta<o(\M)$ we let
\begin{center}
$\mathcal{O}^\M_\eta= \cup\{ \N\inseg\M: \rho(\N)=\eta $ and $\eta$ is not overlapped in $\N\}$. 
\end{center}

Given an swo\footnote{I.e., self well-ordered, a set $a$ is called self well-ordered if $trc(a\cup \{a\})$ is well-ordered in $L_1(a)$.} $a\in H_{\l}$ we define the stacks over $a$ by 
\begin{definition}\label{stacks} 
\begin{enumerate}
\item $Lp^{\Sigma}(a)=\cup \{ \N : \N$ is a countably iterable sound $\Sigma$-mouse over $a$ such that $\rho(\N)=a\}$,
\item $\K^{\l, \Gamma, \Sigma}(a)=\cup\{ \N: \N$ is a countably $\Gamma$-iterable sound $\Sigma$-mouse over $a$ such that $\rho(\N)=a\}$,
\item $\W^{\l,\Gamma, \Sigma}(a)=\cup \{ \N: \N$ is a $\Gamma$-iterable sound $\Sigma$-mouse over $a$ such that $\rho(\N)=a\}$.
\end{enumerate}
\end{definition}
When $\Gamma=\powerset(\cup_{\k<\l}\powerset(\k))$ then we omit it from our notation. We can define the sequences $\la Lp_\xi^{\Sigma}(a) : \xi<\eta\ra$, $\la \K_\xi^{\l, \Gamma, \Sigma}(a) : \xi<\nu\ra$, and $\la \W_\xi^{\l, \Gamma, \Sigma}(a) : \xi<\mu\ra$ as usual. For $Lp$ operator the definition is as follows:
\begin{enumerate}
\item $Lp^\Sigma_0(a)=Lp^\Sigma(a)$,
\item for $\xi<\eta$, if $Lp^{\Sigma}_\xi(a)\in H_{\l}$ then $Lp^\Sigma_{\xi+1}=Lp^{\Sigma}(Lp^\Sigma_\xi(a))$,
\item for limit $\xi<\eta$, $Lp^\Sigma_{\xi}=\cup_{\a<\xi}Lp^\Sigma_\a(a)$,
\item $\eta$ is least such that for all $\xi<\eta$, $Lp_{\xi}^{\Sigma}(a)$ is defined.
\end{enumerate}
The other stacks are similar.

\subsection{$(\Gamma, \Sigma)$-suitable premice}

Again we fix an uncountable cardinal $\l$ such that $ZF$ holds. We also fix $\Sigma\in Code_\l$ such that $\Sigma$ is a $(\a, \l)$-iteration strategy with hull condensation and $\Gamma\subseteq \powerset(\cup_{\k<\l}\powerset(\k))$ such that $Code_\l(\Sigma)\in \Gamma$. We now start outlining how to import the material from Subsection 1.3 of \cite{StrengthPFA1}. The most important notion we need from that subsection is that of $(\Gamma, \Sigma)$-suitable premouse which is defined as follows: 

\begin{definition}[$(\Gamma, \Sigma)$-suitable premouse] \label{suitable premouse}
 A $\Sigma$-premouse $\P$ is $(\Gamma, \Sigma)$-suitable if there is a unique cardinal $\delta$ such that
\begin{enumerate}
 \item $\P\models ``\d$ is the unique Woodin cardinal",
 \item $o(\P)=\sup_{n<\omega} (\d^{+n})^\P$,
 \item for every $\eta\not =\d$, $\W^{ \l,\Gamma, \Sigma}(\P|\eta)\models ``\eta$ isn't Woodin".
 \item for any $\eta<o(\P)$, $\mathcal{O}_\eta^\P=\W^{\l, \Gamma, \Sigma}(\P|\eta)$.
 \end{enumerate}
\end{definition}

Suppose $\P$ is $\Gamma$-suitable.  Then we let $\d^\P$ be the $\d$ of \rdef{suitable premouse}. We then proceed as in Section 1.3 of \cite{StrengthPFA1} to define (1) nice iteration tree, (2) $(\Gamma, \Sigma)$-short tree, (3) $(\Gamma, \Sigma)$-maximal tree, (4) $(\Gamma, \Sigma)$-correctly guided finite stack and (5) the last model of a $(\Gamma, \Sigma)$-correctly guided finite stack by using $\W^{\l, \Gamma, \Sigma}$ operator instead of $\W^\Gamma$ operator. Next, we let 

\begin{definition}[$S(\Gamma, \Sigma)$ and $F(\Gamma, \Sigma)$] $S(\Gamma, \Sigma)=\{ \Q: \Q$ is $(\Gamma, \Sigma)$-suitable$\}$. Also, we let $F(\Gamma, \Sigma)$ be the set of functions $f$ such that $dom(f)=S(\Gamma, \Sigma)$ and for each $\P\in S(\Gamma, \Sigma)$, $f(\P)\subseteq \P$ and $f(\P)$ is amenable to $\P$, i.e., for every $X\in \P$, $X\cap f(\P)\in \P$.
\end{definition}

 Given $\P\in S(\Gamma, \Sigma)$ and $f\in F(\Gamma, \Sigma)$ we let $f_n(\P)=f(\P)\cap \P|((\d^\P)^{+n})^\P$. Then $f(\P)=\cup_{n<\omega}f_n(\P)$. We also let 
\begin{center}
$\gg^\P_{f}=\d^\P \cap Hull^{\P}_1( \{ f_n(\P) : n<\omega \}) $.
\end{center}
Notice that
\begin{center}
$\gg^\P_{f}=\d^\P \cap Hull^{\P}_1(\gg_{f}^\P\cup\{ f_n(\P) : n<\omega \} )$.
\end{center}
We then let 
\begin{center}
$H_{f}^\P =Hull^{\P}_1(\gg^\P_{f}\cup \{ f_n(\P) : n<\omega \} )$.
\end{center}
If $\P\in S(\Gamma, \Sigma)$, $f\in F(\Gamma, \Sigma)$ and $i: \P\rightarrow \Q$ is an embedding then we let $i(f(\P))=\cup_{n<\omega}i(f_n(\P))$.

The following are the next block of definitions that routinely generalize into our context: (1) $(f, \Sigma)$-iterability,
(2) $\vec{b}=\la b_k: k< m\ra$ witness $(f, \Sigma)$-iterability for $\VT=\la \T_k, \P_k : k< m\ra$, and
(3) strong $(f, \Sigma)$-iterability.
These definitions generalize by using $S(\Gamma, \Sigma)$ and $f\in F(\Gamma, \Sigma)$ instead of $S(\Gamma)$ and $F(\Gamma)$.  

If $\P$ is strongly $(f, \Sigma)$-iterable and $\VT$ is a $(\Gamma, \Sigma)$-correctly guided finite stack on $\P$ with last
model $\R$ then we let
\begin{center}
$\pi^{\Sigma}_{\P, \R, f}:H_f^\P\rightarrow H_f^\R$
\end{center}
be the embedding given by any $\vec{b}$ which witnesses the $(f, \Sigma)$-iterability of $\VT$, i.e., fixing
$\vec{b}$ which witnesses $f$-iterability for $\VT$,
\begin{center}
$\pi^\Sigma_{\P, \R, f} =\pi_{\VT, \vec{b}}\rest H_f^\P$.
\end{center}
Clearly, $\pi^{\Sigma}_{\P, \R, f}$ is independent of $\VT$ and $\vec{b}$. Here we keep $\Sigma$ in our notation for $\pi^{\Sigma}_{\P, \R, f}$ because it depends on a $(\Gamma, \Sigma)$-correct iterations. It is conceivable that $\R$ might also be a $(\Gamma, \Lambda)$-correct iterate of $\P$ for another $\Lambda$, in which case $\pi^{\Sigma}_{\P, \R, f}$ might be different from $\pi^{\Lambda}_{\P, \R, f}$. However, the point is that these embeddings agree on $H_f^\P$. Also, we do not carry $\Gamma$ in our notation as it is usually understood from the context.

Given a finite sequence of functions $\vec{f}=\la f_i : i<n\ra \in F(\Gamma, \Sigma)$, we let $\oplus_{i<n}f_i\in F(\Gamma, \Sigma)$ be the function given by $(\oplus_{i<n}f_i)(\P)=\la f_i(\P): i<n\ra$. We set $\oplus\vec{f}= \oplus_{i<n}f_i$. 

We then let
\begin{center}
$\mathcal{I}_{\Gamma, F, \Sigma}=\{ (\P, \vec{f}): \P\in S(\Gamma, \Sigma)$, $\vec{f}\in F^{<\omega}$ and $\P$ is strongly $\oplus\vec{f}$-iterable$\}$.
\end{center}

\begin{definition}
Given $F\subseteq F(\Gamma, \Sigma)$, we say $F$ is \textit{closed} if for any $\vec{f}\subseteq  F^{<\omega}$ there is $\P$ such that $(\P, \oplus\vec{f})\in \mathcal{I}_{\Gamma, F, \Sigma}$ and for any $\vec{g}\subseteq F^{<\omega}$, there is a $(\Gamma, \Sigma)$-correct iterate $\Q$ of $\P$ such that $(\Q, \vec{f}\cup\vec{g})\in \mathcal{I}_{\Gamma, F, \Sigma}$.
\end{definition}  

Fix now a closed $F\subseteq F(\Gamma, \Sigma)$. Let 
\begin{center}
$\mathcal{F}_{\Gamma, F, \Sigma}=\{ H^\P_f: (\P, f)\in \mathcal{I}_{\Gamma, F, \Sigma}\}$.
\end{center}
We then define $\preceq_{\Gamma, F, \Sigma}$ on $\mathcal{I}_{\Gamma, F, \Sigma}$ by letting $(\P, \vec{f})\preceq_{\Gamma, F, \Sigma} (\Q, \vec{g})$ iff $\Q$ is a $(\Gamma, \Sigma)$-correct iterate of $\P$ and $\vec{f}\subseteq \vec{g}$. Given $(\P, \vec{f})\preceq_{\Gamma, F, \Sigma} (\Q, \vec{g})$, we have that 
\begin{center}
$\pi^\Sigma_{\P, \Q, \vec{f}}: H^\P_{\oplus\vec{f}}\rightarrow H^\Q_{\oplus\vec{f}}$.
\end{center}
Notice that if $F$ is closed then $\preceq_{\Gamma, F, \Sigma}$ is directed. Let then 
\begin{center}
$\M_{\infty, \Gamma, F, \Sigma}$
\end{center}
be the direct limit of $(\mathcal{F}_{\Gamma, F, \Sigma}, \preceq_{\Gamma, F, \Sigma})$ under $\pi^\Sigma_{\P, \Q, \vec{f}}$'s. Given $(\P, \vec{f})\in \mathcal{I}_{\Gamma, F, \Sigma}$, we let $\pi^\Sigma_{\P, \vec{f}, \infty}: H^\P_{\oplus\vec{f}}\rightarrow \M_{\infty, \Gamma, F, \Sigma}$ be the direct limit embedding. Using the proof of Lemma 1.19 of \cite{StrengthPFA1}, we get that

\begin{lemma} $\M_{\infty, \Gamma, F, \Sigma}$ is wellfounded.
\end{lemma}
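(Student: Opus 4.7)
The plan is to adapt the direct-limit wellfoundedness argument of Lemma 1.19 of \cite{StrengthPFA1} to the hybrid $\Sigma$-premouse setting. Suppose toward a contradiction that $\M_{\infty, \Gamma, F, \Sigma}$ is illfounded, and fix a strictly decreasing sequence $\la \alpha_n : n<\omega\ra$ of its ordinals. For each $n$ pick $(\P_n, \vec{f}_n)\in \mathcal{I}_{\Gamma, F, \Sigma}$ together with $\beta_n \in H^{\P_n}_{\oplus \vec{f}_n}\cap Ord$ such that $\pi^\Sigma_{\P_n, \vec{f}_n, \infty}(\beta_n)=\alpha_n$.

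The first step is to use closedness of $F$, which yields directedness of $\preceq_{\Gamma, F, \Sigma}$, to inductively build a $\preceq_{\Gamma, F, \Sigma}$-increasing chain $\la (\Q_n, \vec{g}_n) : n<\omega \ra$ in $\mathcal{I}_{\Gamma, F, \Sigma}$ with $(\P_k, \vec{f}_k)\preceq_{\Gamma, F, \Sigma}(\Q_n, \vec{g}_n)$ for all $k\leq n$. Setting $\gamma_n^k = \pi^\Sigma_{\P_k, \Q_n, \oplus \vec{f}_k}(\beta_k)$ for $k\leq n$, the compatibility of the $\pi^\Sigma$-embeddings with the direct-limit maps gives $\pi^\Sigma_{\Q_n, \vec{g}_n, \infty}(\gamma_n^k)=\alpha_k$ and $\pi^\Sigma_{\Q_n, \Q_{n+1}, \oplus \vec{g}_n}(\gamma_n^k)=\gamma_{n+1}^k$, and order-preservation on ordinals yields $\gamma_n^0 > \gamma_n^1 > \cdots > \gamma_n^n$ inside $\Q_n$.

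The main step is to realize the whole chain inside a single wellfounded iterate. Concatenating the $(\Gamma, \Sigma)$-correctly guided finite stacks witnessing each $(\Q_n, \vec{g}_n)\preceq_{\Gamma, F, \Sigma}(\Q_{n+1}, \vec{g}_{n+1})$ produces an $\omega$-length stack $\VT$ on $\Q_0$. Strong $(\oplus \vec{g}_0, \Sigma)$-iterability of $\Q_0$, together with hull condensation of $\Sigma$, is then used to pick a cofinal branch $b$ through $\VT$ whose direct limit $\R$ is wellfounded and whose branch embedding $\pi_{\VT, b}$ restricts to $\pi^\Sigma_{\Q_n, \R, \oplus \vec{g}_n}$ on each $H^{\Q_n}_{\oplus \vec{g}_n}$. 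The images $\pi_{\VT, b}(\gamma_n^n)$ then form an infinite strictly descending sequence of ordinals in the wellfounded model $\R$, which is the desired contradiction.

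The main obstacle is precisely the choice of the branch $b$ through the $\omega$-length stack $\VT$ coherently with the entire system of hull embeddings. Strong $(f, \Sigma)$-iterability delivers branches and uniqueness only for finite stacks, so the trick, as in Lemma 1.19 of \cite{StrengthPFA1}, is to pass to a countable Skolem hull $X$ of $H_\theta$ containing all relevant data, uncollapse to obtain a countable $\omega$-stack on the image of $\Q_0$, choose a branch using full countable iterability of that uncollapsed $\Sigma$-premouse (supplied by $\Sigma$ via its hull condensation), and lift back. In the hybrid context the only new content to verify is that $\Sigma$-premouse structure is preserved under Skolem hulls and their transitive collapses, which is built into the definition of $Code_\l$ and into the requirement $Code_\l(\Sigma)\in \Gamma$. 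Modulo that check, the argument transports verbatim.
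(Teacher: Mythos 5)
The paper gives no proof here, merely invoking Lemma 1.19 of \cite{StrengthPFA1}, so I assess your reconstruction on its own terms. Your opening moves --- pull the descending sequence $\la\alpha_n\ra$ back to preimages, use closedness and directedness to dominate them by a $\preceq_{\Gamma,F,\Sigma}$-increasing chain $\la(\Q_n,\vec{g}_n)\ra$, and chase the ordinals $\gamma_n^k$ --- are fine and have the right shape for this kind of direct-limit argument. The gap is in the step you yourself flag as the main obstacle: producing a wellfounded $\omega$-limit $\R$ realizing the whole chain.

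You attribute the needed wellfoundedness to ``full countable iterability of that uncollapsed $\Sigma$-premouse (supplied by $\Sigma$ via its hull condensation).'' This conflates two very different things. $\Sigma$ is an iteration strategy for the base hod mouse $\M_\Sigma$, not for the $(\Gamma,\Sigma)$-suitable premice $\Q_n$; the trees making up $\VT$ live at and above $\delta^{\Q_n}$, far above $\M_\Sigma$, and are not in $\Sigma$'s domain. Hull condensation of $\Sigma$ only guarantees that hulls of $\Sigma$-premice are again $\Sigma$-premice; it confers no iterability whatsoever on the suitable premice themselves. Iterability of suitable premice is exactly the delicate point that the entire apparatus of strong $(f,\Sigma)$-iterability and $(F,G,\Sigma)$-quasi-iterations is designed to work around, and what that apparatus delivers is branches for \emph{finite} correctly guided stacks, and only modulo agreement on the $H_f$-hulls; it does not produce a strategy for an $\omega$-length concatenation, nor guarantee that the resulting direct limit is wellfounded. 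The Skolem-hull reduction does not repair this, since the collapsed copy of $\Q_0$ inherits exactly the same deficiency. The argument must secure wellfoundedness from a different source --- in the $AD^+$ incarnation of this lemma one bounds the ranks of the $\pi^\Sigma_{\P,\vec{f},\infty}$-images by ordinals of the background determinacy model, as in the $\H$ direct-limit computations --- and the concatenate-and-iterate step as you have written it is not justified.
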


The following list is then the next block of definitions that carry over to our context with no significant changes:
(1) semi $(F, G, \Sigma)$-quasi iteration,
(2) the embeddings of the $(F, G, \Sigma)$-quasi iteration (in this context, we will have $\Sigma$ in the superscripts),
(3) $(F, G, \Sigma)$-quasi iterations,
(4) the last model of $(F, G)$-quasi iterations,
(5) $\vec{f}$-guided strategies,
(6) a $\Sigma$-quasi-self-justifying-system ($\Sigma$-qsjs) and
(7) $(\omega, \Gamma, \Sigma)$-suitable premice.

\subsection{$\H_\Sigma$ under $AD^+$}
It turns out that for certain iteration strategies $\Sigma$, $V_\Theta^{\H_{\Sigma}}$ of many models of determinacy can be obtained as $\M_{\infty, \Gamma, F, \Sigma}$ for some $\Gamma$ and $F$. For the rest of this section we assume $AD^+$. Suppose $\Sigma$ is an iteration strategy of some hod mouse $\Q$ and suppose $\Sigma$ is $\powerset(\powerset(\omega))$-fullness preserving (see \cite{ATHM}) and has branch condensation (i.e., we take $\l=\omega_1$). Assume further that  $V=L(\powerset(\mathbb{R}))+MC(\Sigma)+\Theta=\theta_\Sigma$ and that $\P$ is below ``$\theta$ is measurable", i.e., below measurable limit of Woodins. We let $\Gamma=\powerset(\powerset(\omega))$ and for the duration of this subsection, we drop $\Gamma$ from our notation. Thus, a $\Sigma$-suitable premouse is a $(\Gamma, \Sigma)$-suitable premouse and etc.

Suppose $\P$ is $\Sigma$-suitable and $A\subseteq \bR$ is $OD_\Sigma$. We say $\P$ \textit{weakly term captures} $A$ if letting $\d=\d^\P$, for each $n<\omega$ there is a term relation $\tau\in \P^{Coll(\omega, (\d^{+n})^\P)}$ such that for comeager many $\P$-generics, $g\subseteq Coll(\omega, (\d^{+n})^\P)$, $\tau_g=\P[g]\cap A$. We say $\P$ \text{term captures} $A$ if the equality holds for all generics. The following lemma is essentially due to Woodin and the proof for mice can be found in \cite{CMI}.

\begin{lemma} Suppose $\P$ is $\Sigma$-suitable and $A\subseteq \bR$ is $OD_\Sigma$. Then $\P$ weakly term captures $A$. Moreover, there is a $\Sigma$-suitable $\Q$ which term captures $A$.
\end{lemma}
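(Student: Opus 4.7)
The plan is to adapt Woodin's classical term-capturing argument for $OD$ sets of reals to the hybrid $\Sigma$-suitable setting. Since $A$ is $OD_\Sigma$ and we work in $L(\powerset(\bR))$ under $AD^+$ together with $MC(\Sigma)$, fix a formula $\phi$ and an ordinal parameter $\alpha$ such that $x\in A$ iff $\phi(x,\Sigma,\alpha)$ holds. The structural point is that $\P$, being a $\Sigma$-premouse with Woodin cardinal $\d^\P$, carries enough information about $\Sigma$ on its sequence to compute $\phi$ correctly on reals that arise as generics over $(\Gamma,\Sigma)$-correct iterates of $\P$; this is made precise using hull condensation and the $\Sigma$-fullness clauses built into \rdef{suitable premouse}.

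For the weak term-capturing claim, fix $n<\omega$ and let $\d=\d^\P$. Working inside $\P$, I would define the candidate term
$$\tau_n=\{(p,\check{x})\in\P : p\Vdash^\P_{Coll(\omega,(\d^{+n})^\P)}\phi(\dot{x},\dot{\Sigma},\check{\alpha})\},$$
where the forcing statement is interpreted via $\P$'s internal $\Sigma$-predicate, extended to the generic extension through hull condensation. To see that $\tau_{n,g}=\P[g]\cap A$ for comeager many $g\subseteq Coll(\omega,(\d^{+n})^\P)$, I would apply the genericity iteration theorem at the Woodin cardinal $\d$: for each real $x\in\bR^V$, the extender algebra at $\d$ produces a $(\Gamma,\Sigma)$-correct iterate $\P^*$ of $\P$ making $x$ generic over $\P^*$. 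Hull condensation of $\Sigma$ then ensures that the forcing relation at $\P^*$ reflects actual membership in $A$, and pulling back along the branch embedding delivers the required agreement for all $g$ avoiding the meager set of generics for which some relevant real fails to be realized as generic over an iterate.

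For the ``moreover'' clause, I would iterate $\P$ further to remove the meager exceptional set. Concretely, perform a simultaneous genericity iteration of $\P$, using the extender algebra at $\d^\P$ with a bookkeeping device enumerating a dense set of conditions, to obtain a $(\Gamma,\Sigma)$-correct iterate $\Q$ over which every real that can occur in a $Coll(\omega,(\d^{+n})^\Q)$-generic extension of $\Q$ is already absorbed as a $\Q$-generic real via the extender algebra. Branch condensation and $\powerset(\powerset(\omega))$-fullness preservation of $\Sigma$ guarantee that $\Q\in S(\Gamma,\Sigma)$, and the image of $\tau_n$ under the branch embedding then term-captures $A$ over $\Q$ on \emph{all} generics rather than merely comeager ones. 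The main obstacle is maintaining $(\Gamma,\Sigma)$-suitability---in particular the Woodinness of $\d^\Q$ and the $\Sigma$-fullness of initial segments of $\Q$---throughout the long genericity iteration; in the hybrid setting this is precisely what branch condensation and fullness preservation of $\Sigma$ purchase us, following the template of Woodin's original argument as presented in \cite{CMI}.
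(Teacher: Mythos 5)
Your proposal has a genuine gap in the definition of the candidate term. You set
$$\tau_n=\{(p,\check{x})\in\P : p\Vdash^\P_{Coll(\omega,(\d^{+n})^\P)}\phi(\dot{x},\dot{\Sigma},\check{\alpha})\}$$
with the forcing relation interpreted internally to $\P$. This cannot work as written: $\P$ is a $\Sigma$-suitable premouse with $o(\P)=(\d^\P)^{+\omega}$, so it is a small (indeed countable, in the ambient model) structure, while $\alpha$ is an arbitrary ordinal parameter witnessing that $A$ is $OD_\Sigma$ and may lie far above $o(\P)$; there is no reason $\check{\alpha}$ is a name in $\P$. Even when $\alpha\in\P$, the model $\P$ is not a model of $AD^+ +V=L(\powerset(\bR))$ and has no internal way to correctly evaluate the formula $\phi(x,\Sigma,\alpha)$ whose truth is being computed in $L(\powerset(\bR))$. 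Suitability of $\P$ gives local $\W^{\l,\Gamma,\Sigma}$-closure, which is why $\P$ can locally certify membership in $\Sigma^2_1(\Sigma)$-like sets via the existence of $\Sigma$-mice, but $OD_\Sigma$ is a vastly larger pointclass than this mechanism reaches. The later steps inherit the problem: "hull condensation of $\Sigma$ ensures that the forcing relation at $\P^*$ reflects actual membership in $A$" is not justified, because hull condensation constrains $\Sigma$, not the truth value of the external formula $\phi$ under internal forcing.

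Woodin's actual argument, which is the one the paper is citing from \cite{CMI}, does not proceed by having $\P$ force the defining formula. Instead it uses the fact that under $AD^+ +MC(\Sigma)+\Theta=\theta_\Sigma$ every $OD_\Sigma$ set is $\kappa$-Suslin via an $OD_\Sigma$ scale, so one fixes $OD_\Sigma$ trees $T,U$ with $A=p[T]$ and $\bR\setminus A=p[U]$. One then takes a countable Skolem hull of a sufficiently rich structure $(L_\gamma[T,U,\Sigma],\in,T,U)$, pulls $T,U$ back to countable trees $\bar{T},\bar{U}$, and performs a genericity iteration of $\P$ at $\d^\P$ (or of a hull of $\P$) absorbing the countable hull into the generic extension of the iterate $\Q$. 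The term relation over $\Q$ is then literally built from the images of $\bar{T},\bar{U}$: $\tau_{A,n}$ asserts that $\check{x}\in p[\bar{T}]$ in the collapse extension, and absoluteness of well-foundedness is what makes the term relation compute $A$ correctly on all generics over $\Q$. The weak term-capturing of $\P$ itself is then recovered by a reflection argument back along the iteration, giving agreement on comeager-many generics; the "moreover" clause falls out because $\Q$ carries the trees outright. Your proposed long genericity iteration to "absorb every real" does not address the central point, which is that the term relation must be defined from a Suslin representation of $A$ internalized in the iterate, not from an internal reading of the formula $\phi$.
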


Given a $\Sigma$-suitable $\P$ and an $OD_\Sigma$ set of reals $A$, we let $\tau_{A, n}^\P$ be the standard name for a set of reals in $\P^{Coll(\omega, (\d^{+n})^\P)}$ witnessing the fact that $\P$ weakly captures $A$. We then define $f_A\in F(\Gamma, \Sigma)$ by letting
\begin{center}
$f_A(\P)=\la \tau^\P_{A, n} : n<\omega\ra$.
\end{center}
Let $F_{\Sigma, od}=\{ f_A: A\subseteq \bR \wedge A\in OD_\Sigma\}$. 

 All the notions we have defined above using $f\in F(\Gamma, \Sigma)$ can be redefined for $OD_\Sigma$ sets $A\subseteq \mathbb{R}$ using $f_A$ as the relevant function. To save some ink, in what follows, we will say $A$-iterable instead of $f_A$-iterable and similarly for other notions. Also, we will use $A$ in our subscripts instead of $f_A$.

 The following lemma is one of the most fundamental lemmas used to compute $\H$ and it is originally due to Woodin. Again, the proof can be found in \cite{CMI}.

\begin{theorem}\label{existence of quasi-iterable premice} For each $f\in F_{\Sigma, od}$, there is $\P\in S(\Gamma, \Sigma)$ which is $(F_{\Sigma, od}, f)$-quasi iterable.
\end{theorem}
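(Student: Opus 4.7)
The plan is to adapt Woodin's classical existence theorem for quasi-iterable suitable premice, as exposited in \cite{CMI}, to the hybrid $\Sigma$-premouse setting. Write $f = f_A$ for some $OD_\Sigma$ set $A \subseteq \bR$. By the preceding lemma there is a $(\Gamma, \Sigma)$-suitable $\Q_0$ which term-captures $A$, and I would aim to produce a $(\Gamma, \Sigma)$-correct iterate $\P$ of $\Q_0$ which is $(F_{\Sigma, od}, f_A)$-quasi iterable. Under $AD^+$ together with $V = L(\powerset(\bR))$ and $\Theta = \theta_\Sigma$, a Skolem hull argument on countable substructures reduces the problem to a countable enumeration $F_0 = \{f_{A_i} : i < \omega\} \subseteq F_{\Sigma, od}$ with $f_A = f_{A_0}$, since quasi-iterability only ever quantifies over finite sequences from $F_{\Sigma, od}$.

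Next, I would construct a sequence $\la \P_n, \vec{\T}_n, \vec{b}_n : n < \omega\ra$ inductively. Set $\P_0 = \Q_0$. Given $\P_n$, apply the previous lemma and the closure properties of $F_{\Sigma, od}$ to obtain a $(\Gamma, \Sigma)$-correct iterate $\P_{n+1}$ of $\P_n$ which simultaneously term-captures each of $A_0, \ldots, A_n$; record the stack $\vec{\T}_n$ going from $\P_n$ to $\P_{n+1}$ and the branch sequence $\vec{b}_n$ chosen along it. On short-tree components of $\vec{\T}_n$ the branch is canonical, furnished by the $\W^{\l, \Gamma, \Sigma}$ operator. On maximal-tree components, the branch $b$ is selected by the term-capturing criterion: for each $i \leq n$ and $m < \omega$, the image $\pi_b(\tau^{\P_n}_{A_i, m})$ must continue to weakly term-capture $A_i$ in the branch model. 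Under the branch condensation and $\powerset(\powerset(\omega))$-fullness preservation of $\Sigma$, this criterion pins down a unique branch, which must agree with the $\Sigma$-dictated branch of the underlying hod mouse strategy. The sought-after $\P$ is then the union/limit of $\la \P_n : n < \omega\ra$ taken along the induced system, and strong $(F_{\Sigma, od}, f_A)$-quasi-iterability of $\P$ is read off the construction by appealing to the recorded $\vec{b}_n$'s.

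The principal obstacle I expect is the uniformity of branch selection: quasi-iterability demands, for each finite $\vec{g} \subseteq F_0^{<\omega}$, a \emph{single} sequence $\vec{b}$ simultaneously witnessing $(\oplus \vec{g}, \Sigma)$-iterability along every initial segment of the constructed stack, not merely stage-by-stage consistent choices. The bookkeeping above is arranged to meet this demand because the term-capturing criterion at stage $n$ commits to all branches needed for $\{f_{A_0}, \ldots, f_{A_n}\}$ at once, and the uniqueness of $\Sigma$-branches under branch condensation forces any later extension to respect the earlier commitments. A secondary concern is certifying that the branch picked out by term-capturing is genuinely $\Sigma$-correct, i.e.\ that the direct-limit model is a $\Sigma$-premouse and not merely a $\Sigma^\pi$-premouse for some unwanted $\pi$; this is precisely where $\powerset(\powerset(\omega))$-fullness preservation enters, ensuring that any $(\Gamma, \Sigma)$-correct cofinal branch automatically yields a $\Sigma$-premouse. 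Failure of any step would produce an $OD_\Sigma$-definable failure tree, contradicting the assumed Wadge bound $\Theta = \theta_\Sigma$ via a standard boundedness argument.
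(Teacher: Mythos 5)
The paper gives no proof of this theorem: it attributes the result to Woodin and defers entirely to \cite{CMI}, so the comparison must be against the argument recorded there. In that argument the engine is a boundedness (or $\Sigma^2_1$-reflection) argument: one assumes toward a contradiction that no suitable $\P$ is $(F_{\Sigma,od},f)$-quasi iterable, observes that the resulting tree of ``bad attempts'' is $OD_{\Sigma,A}$, and derives a contradiction with $\Theta=\theta_\Sigma$ by boundedness. In your sketch that boundedness argument is the last sentence and is cast as a fallback (``failure of any step would produce a\ldots failure tree''); in fact it is the whole proof, and the apparatus in the middle of your sketch does not substitute for it.

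The $\omega$-chain construction in the middle does not deliver what the theorem asks for, for two reasons. First, quasi-iterability of $\P$ is a universal statement about \emph{all} $(\Gamma,\Sigma)$-correct stacks on $\P$, not only the stack you built along the way; arranging consistent branch choices along a single constructed stack says nothing about an arbitrary stack a later user plays. Second, there is a circularity: the direct-limit system of suitable premice only makes sense once one already knows that strongly iterable premice exist and that the system is directed, which is precisely what this theorem is establishing — passing to the ``union/limit of $\la\P_n\ra$'' presupposes the conclusion. There are smaller issues as well: the opening Skolem-hull reduction to a countable $F_0\subseteq F_{\Sigma,od}$ changes the statement, since quasi-iterability quantifies over finite sequences from the full $F_{\Sigma,od}$, and being quasi-iterable relative to a proper countable subfamily is strictly weaker; and the appeal to branch condensation of $\Sigma$ to ``pin down a unique branch'' conflates the hod pair strategy $\Sigma$ (which iterates $\P^\Sigma$, not the suitable $\Sigma$-premice) with the branch choice for maximal trees on suitable premice — the latter has no $\Sigma$-given canonical branch, and identifying the fullness-preserving branch is exactly what the boundedness argument buys you.
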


Let $\M_\infty=\M_{\infty, F_{od}, \Sigma}$. 

\begin{theorem}[Woodin, \cite{CMI}]\label{hod theorem} $\d^{\M_\infty}=\Theta$, $\M_\infty\in \H_\Sigma$ and 
\begin{center}
$\M_\infty|\Theta=(V_\Theta^{\H_\Sigma}, \vec{E}^{\M_\infty|\Theta}, S^{\M_\infty}, \in)$
\end{center}
 where $S^{\M_\infty}$ is the predicate of $\M_\infty$ describing $\Sigma$.
\end{theorem}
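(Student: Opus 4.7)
The plan is to establish the three conclusions of the theorem in turn: (a) $\M_\infty \in \H_\Sigma$, (b) $\d^{\M_\infty}=\Theta$, and (c) the identification of $\M_\infty|\Theta$ with the displayed structure. For (a), I observe that the entire construction of $\M_\infty$ is $OD_\Sigma$: the class $S(\Gamma,\Sigma)$ of $\Sigma$-suitable premice, the family $F_{\Sigma,od}$ (consisting of the $f_A$'s for $A\in OD_\Sigma$ a set of reals), the notion of strong $(f,\Sigma)$-iterability, the hulls $H^\P_{f}$, the directed preorder $\preceq_{\Gamma,F_{\Sigma,od},\Sigma}$, and the transition embeddings $\pi^{\Sigma}_{\P,\Q,\vec{f}}$ are each definable from $\Sigma$. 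Hence $\M_\infty$ is $OD_\Sigma$, and so it lies in $\H_\Sigma$.

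For (b), the $\leq$-direction is a boundedness argument: each element of $\M_\infty$ is the $\pi^{\Sigma}_{\P,\vec{f},\infty}$-image of some element of some $H^\P_{\oplus\vec{f}}$, and the ordinals so obtained are $OD_\Sigma$, giving $o(\M_\infty)\leq \Theta$ and in particular $\d^{\M_\infty}\leq \Theta$. The $\geq$-direction requires exhibiting, for each $\a<\Theta$, an $OD_\Sigma$ set of reals $A$ whose associated hull ordinal $\pi^{\Sigma}_{\P,f_A,\infty}(\gg^\P_{f_A})$ exceeds $\a$. The recipe is standard: starting from an $OD_\Sigma$ prewellordering of $\bR$ of length at least $\a$, encode it as an $OD_\Sigma$ set $A\subseteq \bR$, apply \rthm{existence of quasi-iterable premice} to obtain a strongly $f_A$-iterable $\P$, and track the term relations $\tau^\P_{A,n}$ through the direct limit to see that they correctly compute the relevant initial segment of the Wadge hierarchy.

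For (c), one inclusion is immediate from (a): since $\M_\infty|\Theta$ is $OD_\Sigma$ of rank $\leq \Theta$, it is contained in $V_\Theta^{\H_\Sigma}$. The reverse inclusion is the substantive content. The plan is to start with an arbitrary $X\in V_\Theta^{\H_\Sigma}$, note that $X$ is $OD_\Sigma$, code its defining formula together with its parameters into an $OD_\Sigma$ set $A\subseteq \bR$, and then extract $X$ from $\pi^{\Sigma}_{\P,f_A,\infty}(\tau^\P_{A,n})$ for appropriately chosen $\P$ and $n$. The extender sequence $\vec{E}^{\M_\infty|\Theta}$ is then read off from the extender sequences of the iterates used in the direct limit, which cohere across the system by term-capture; and the strategy predicate $S^{\M_\infty}$ is determined by how $\Sigma$ acts on these iterates, using that $\Sigma$ has branch condensation to guarantee that the predicate is well-defined on the direct limit.

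I expect the main obstacle to be the reverse inclusion in (c), specifically checking that the extender sequence of $\M_\infty|\Theta$ and the predicate $S^{\M_\infty}$ actually match the intrinsic structure of $V_\Theta^{\H_\Sigma}$ as a $\Sigma$-premouse. This requires that the family of $OD_\Sigma$ sets of reals be rich enough to term-capture \emph{every} extender and every relevant value of $\Sigma$ on short iterations of suitable premice; fullness preservation and branch condensation for $\Sigma$, together with the closure of $F_{\Sigma,od}$, are exactly what one invokes to make this go through, mirroring Woodin's original HOD-computation arguments in the pure inner model setting.
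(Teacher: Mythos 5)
Note first that the paper does not prove this theorem: it is stated as a black-box result, attributed to Woodin and cited to \cite{CMI}, so there is no paper-internal argument to compare against. Your outline does track the broad shape of Woodin's HOD computation, but your argument for the $\leq$-direction of clause (b) has a genuine error. You infer ``$o(\M_\infty)\leq\Theta$ and in particular $\d^{\M_\infty}\leq\Theta$'' from the observation that the ordinals in the range of the direct-limit maps are $OD_\Sigma$. Being $OD_\Sigma$ does not bound an ordinal below $\Theta$: by your own clause (a), $\M_\infty\in\H_\Sigma$, so \emph{every} ordinal of $\M_\infty$ is $OD_\Sigma$, and $o(\M_\infty)=\sup_n(\d^{+n})^{\M_\infty}>\d^{\M_\infty}$, so if the theorem holds at all then $o(\M_\infty)>\Theta$ and your intermediate claim is false. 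What one actually bounds is $\d^{\M_\infty}=\sup\{\pi^{\Sigma}_{\P,\vec f,\infty}(\gg^\P_{\oplus\vec f}):(\P,\vec f)\ \text{in the index set}\}$, and each such ordinal is $<\Theta$ not because it is $OD_\Sigma$ but because it is the surjective image of $\bR$ under an $OD_\Sigma$ map: the suitable premice are countable (here $\l=\omega_1$), the directed system and the hulls $H^\P_{\oplus\vec f}$ are coded by reals, and $\Theta=\theta_\Sigma$ then gives the bound.

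Clause (c) is also considerably thinner than the actual argument. The substantive inclusion $V_\Theta^{\H_\Sigma}\subseteq\M_\infty|\Theta$ is organized by proving $\powerset(\kappa)\cap\H_\Sigma=\powerset(\kappa)\cap\M_\infty$ for each $\kappa<\d^{\M_\infty}$, and the hard step is converting between $OD_\Sigma$ subsets of an ordinal and the term relations $\tau^\P_{A,n}$, which capture sets of \emph{reals}, not subsets of $\kappa$. Your recipe --- code $X$ into an $OD_\Sigma$ set $A\subseteq\bR$, then extract $X$ from $\pi^{\Sigma}_{\P,f_A,\infty}(\tau^\P_{A,n})$ --- elides exactly this translation, which in the full argument is where genericity iterations and fullness preservation of $\Sigma$ (not merely the closure of $F_{\Sigma,od}$) do their work, and where one must also verify that $\vec{E}^{\M_\infty|\Theta}$ and $S^{\M_\infty}$ are intrinsically definable over $V_\Theta^{\H_\Sigma}$. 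You correctly flag this as the main obstacle, but the mechanism you propose does not in fact address it.
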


Finally, if $a\in H_{\omega_1}$ is an swo then we could define $\M_\infty(a)$ by working with $\Sigma$-suitable premice over $a$. Everything we have said about $\Sigma$-suitable premice can also be said about $\Sigma$-suitable premice over $a$ and in particular, the equivalent of \rthm{hod theorem} can be proven using $\H_{(\Sigma, a)\cup\{a\}}$ instead of $\H_\Sigma$ and $\M_\infty(a)$ instead of $\M_\infty$. 

\section{The core model induction} 

The core model induction is a method for constructing models of determinacy while working under various hypothesis. The goal of this section is to develop some basic notions in order to state \rthm{the cmi theorem} which we will use as a black box. Our core model induction is a typical one: we have two uncountable cardinals $\k<\l$, \textit{the core model induction operators} (cmi operators) defined on bounded subsets of $\kappa$ can be extended to act on bounded subsets of $\l$, and for any such cmi operator $F$ acting on bounded subsets of $\l$, the minimal $F$-closed mouse with one Woodin cardinal exists and is $\l$-iterable. Having these three conditions is enough to show, by using the \textit{scales analysis} developed in \cite{K(R)} and \cite{NamScales}, that the \textit{maximal model of $AD^+$} at $\k$ satisfies $AD^+$. The details of the proof of \rthm{the cmi theorem} have appeared, in a less general form, in \cite{CMI} and \cite{PFA}. 

We start by introducing \textit{extendable strategies} and \textit{mouse operators}. We assume $ZFC$ and fix an uncountable cardinal $\l$. 

\begin{definition}[Extendable operators] Suppose $\Lambda\in dom(Code_\l)$ and $\nu\geq \l$. We say $\Lambda$ is $\nu$-extendable if whenever $g\subseteq Coll(\omega, <\l)$, there is a unique $\Psi\in dom(Code_\nu^{V[g]})$ such that $\Psi\rest H_{\l}^V=\Lambda\rest H_{\l}^V$. We also say $\Psi$ is the extension of $\Lambda$ in $V[g]$ and write $\Lambda^g$ for $\Psi$. If $\Lambda$ is $\nu$-extendable for all $\nu<\a$ then we say $\Lambda$ is $<\a$-extendable. When $\a=Ord$ we drop it from our notation and say $\Lambda$ is extendable.
\end{definition}

Suppose now $g$ is a $<\l$-generic, $a\in (H_{\l})^V[g]$ and $\Lambda\in dom(Code_\l)$ is $\l$-extendable.  Then we define $Lp^{\Sigma, g}(a)$, $\W^{\l, \Sigma, g}(a)$ and $\K^{\l, \Sigma, g}(a)$  in $V[g]$ according to \rdef{stacks}. The following connects the three stacks defined above.

\begin{proposition}\label{facts on stacks} For every $a\in H^V_{\l}$, $\W^{\l, \Sigma}(a)\insegeq \K^{\l,\Sigma}(a)\insegeq Lp^\Sigma(a)$. Suppose further that $\eta<\l$, $g\subseteq Coll(\omega, \eta)$ or $g\subseteq Coll(\omega,<\eta)$ is $V$-generic and $\Sigma$ has a unique extension $\Sigma^g$ in $V[g]$. Then $\W^{\l, \Sigma, g}(a)\insegeq \W^{\l, \Sigma}(a)$, $\K^{\l, \Sigma, g}(a)\insegeq \K^{\l, \Sigma}(a)$ and $Lp^{\Sigma, g}(a)\insegeq Lp^{\Sigma}(a)$.
\end{proposition}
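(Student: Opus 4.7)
The plan is to handle the two assertions separately. For the first, $\W^{\lambda, \Sigma}(a) \trianglelefteq \K^{\lambda, \Sigma}(a) \trianglelefteq Lp^\Sigma(a)$, I would observe that the defining class of mice for each stack is contained in that of the next: any $\Gamma$-iterable mouse is countably $\Gamma$-iterable, by pulling the ambient strategy back through any countable elementary submodel (using that $\Gamma = \powerset(\bigcup_{\kappa<\lambda}\powerset(\kappa))$ is closed under such coding), and any countably $\Gamma$-iterable mouse is trivially countably iterable once we drop the restriction that strategies lie in $\Gamma$. Since all three stacks are unions of sound $\Sigma$-mice $\N$ over $a$ with $\rho(\N) = a$, and such mice are linearly ordered by $\trianglelefteq$ via comparison, containment of the defining classes translates directly into the desired initial-segment relations.

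For the second assertion the crucial observation is that countable iterability in $V[g]$ is in general a stronger property than countable iterability in $V$: the forcing makes $\eta$ countable, so hulls of $\N$ of $V$-size below $\eta$ become countable in $V[g]$ and must themselves be $V[g]$-iterable, while strategies for countable-in-$V$ hulls are absolute between the two universes. I would then argue by induction on $\trianglelefteq$-length along $Lp^{\Sigma, g}(a)$ that each initial segment $\N$ lies in $V$ and is countably iterable there, hence $\N \trianglelefteq Lp^\Sigma(a)$. Given $\N$ extending a previously handled $\M \trianglelefteq Lp^\Sigma(a)$, sound with $\rho(\N) \leq o(\M)$, every countable-in-$V$ hull $\bar{\N}$ of $\N$ is in particular countable in $V[g]$ and so has a $V[g]$-iteration strategy; by the standard $Q$-structure analysis this strategy is definable in $V$ from $\bar{\N}$, so $\bar{\N}$ is already $V$-iterable. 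Comparison in $V$ then identifies $\N$ with the unique sound, countably $V$-iterable $\Sigma$-mouse extending $\M$ of the appropriate ordinal height, forcing $\N \in V$ and $\N \trianglelefteq Lp^\Sigma(a)$. The arguments for $\K^{\lambda, \Sigma, g}$ and $\W^{\lambda, \Sigma, g}$ are entirely parallel; one merely replaces ``countably iterable'' by the appropriate $\Gamma$-restricted notion, using that $\Sigma^g \restriction H_\lambda^V = \Sigma \restriction H_\lambda^V$ to identify the $\Sigma^g$-premouse structure of $\N$ with its $\Sigma$-premouse structure below $\lambda$.

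The main obstacle I expect is the absoluteness step, namely that countable-in-$V$ hulls with $V[g]$-strategies already have $V$-strategies. This requires the $Q$-structure argument in the hybrid setting and an induction on the mouse hierarchy; the content is that $Q$-structures for countably iterable sound mice are themselves sound countably iterable mice lower down, hence absolute between $V$ and $V[g]$ by inductive hypothesis. Rather than spelling out the $Q$-structure recursion in full, I would cite the standard treatment adapted to the hybrid setting from \cite{StrengthPFA1}, where analogous absoluteness arguments are developed.
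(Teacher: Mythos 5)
The paper states this proposition without proof, so there is no in-text argument to compare against; your proposal would need to stand on its own. The first assertion is handled correctly (containment of the defining classes plus linearity under $\insegeq$ via comparison), and the overall shape of your argument for the second assertion --- convert a $V[g]$-iterability witness for an initial segment of the $V[g]$-stack into a $V$-iterability witness --- is the right one.

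The step that carries all the weight, however, is left essentially unargued: that the $V[g]$-strategy of a countable-in-$V$ hull $\bar{\N}$ \emph{restricts to a strategy that lies in $V$}. Appealing to ``the standard $Q$-structure analysis'' does not by itself give this: even granting that the branch is pinned down as the unique $b$ with the right $Q(b,\T)$, one still has to show that such a $b$ \emph{exists in $V$}, not merely in $V[g]$, and that the $Q$-structures over $\M(\T)$ (which live over different swo's than $a$) are themselves in $V$ --- so the proposition being proved is not the right inductive hypothesis as you have set it up. More seriously, this $\Sigma^1_2$-absoluteness-style route has no purchase on the $\W^{\l,\Sigma,g}$ clause at all, since $\W$-iterability is $\l$-iterability for premice in $H_\l$ that are typically uncountable and the relevant strategies are not real-coded objects. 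The uniform device --- and the one the authors use everywhere else for exactly this kind of question, e.g.\ in the proof of \rlem{correctness facts} --- is the homogeneity of $Coll(\omega,\eta)$ (resp.\ $Coll(\omega,<\eta)$) together with the uniqueness of the iteration strategy of a sound premouse projecting to its base: uniqueness plus homogeneity, plus the assumed uniqueness of the extension $\Sigma^g$, gives at once that the stack itself lies in $V$, that the strategy restricted to $H_\l^V$ lies in $V$, and that the branch it chooses for any $V$-tree lies in $V$. I would re-center the argument on that observation and drop the ``comparison in $V$ identifies $\N$'' sentence, which is circular as written: it presupposes $\N\in V$ and $V$-iterability, which are exactly what you are trying to establish.
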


We are now in a position to introduce the maximal model of $AD^+$.

\begin{definition}[Maximal model of $AD^+$]\label{the max model}
Suppose $\Sigma\in Code_\l$ is $\l$-extendable and $\mu\leq \l$ is a cardinal such that $\M_\Sigma\in H_\mu$. Let $g\subseteq Coll(\omega, <\mu)$ be generic. Then we let $\S_{\mu, g}^{\l, \Sigma}=L(\K^{\l, \Sigma, g}(\mathbb{R}^{V[g]}))$.
\end{definition}
Thus far strategy mice have been discussed only in situations when the underlying set was an swo. However, $\S_{\mu, g}^{\l, \Sigma}$ is a $\Sigma$-mouse over the set of reals. Such hybrid mice were defined in Section 2.10 of \cite{ATHM}. We say that $\S_{\mu, g}^{\l, \Sigma}$ is the $\l$-maximal models of $AD^+$ at $\mu$. Suppose now that $(\P, \Sigma)$ is a hod pair\footnote{Hod pairs are in the sense of \cite{ATHM}. They all satisfy that there is no measurable limit of Woodins.} such that $\Sigma\in dom(Code_\l)$, $\P \in H_\l$, $\Sigma$ has branch condensation and $\Sigma$ is $\l$-extendable. Then we let $H(\l, \Sigma)$ stand for the following statement:\\

$H({\l}, \Sigma)$: There is some $\a$ such that whenever $g\subseteq Coll(\omega, <\l)$ is $V$-generic, in $V[g]$, $L_\a^{\Sigma^g}(\bR)\models AD^++SMC$ and $\Sigma^g\rest HC^{V[g]}$ is $(\powerset(\mathbb{R}))^{L_\a^{\Sigma^g}(\bR)}$-fullness preserving. \\


We are now in a position to define hod pairs below a cardinal. 

\begin{definition}[Hod pair below $\l$]\label{hod pair below lambda} Suppose $(\P, \Sigma)$ is as above. Then  we say $(\P, \Sigma)$ is a hod pair below $\l$ if $\P\in H_\l$ and $H(\l, \Sigma)$ holds.
\end{definition}

The mouse operators that are constructed during core model induction have two additional properties: they \textit{transfer and relativize well}. To make this notions precise, fix $\Sigma\in dom(Code_\l)$ which is $\l$-extendable. Given a $\Sigma$-mouse operator $F\in dom(Code_\l)$, we say 
\begin{enumerate}
\item (Relativizes well) $F$ relativizes well if there is a formula $\phi(u, v, w)$ such that whenever $X, Y\in dom(F)$ and $N$ are such that $X\in L_1(Y)$ and $N$ is a transitive rudimentary closed set such that $Y, F(Y)\in N$ then $F(X)\in N$ and $F(X)$ is the unique $U$ such that $N\models \phi[U, X, F(Y)]$.
\item (Transfers well) $F$ transfers well if whenever $X, Y\in dom(F)$ are such that $X$ is generic over $L_1(Y)$ then $F(L_1(Y)[X])$ is obtained from $F(Y)$ via $S$-constructions (see Section 2.11 of \cite{ATHM}) and in particular, $F(L_1(Y))[X]=F(L_1(Y)[X])$. 
\end{enumerate}

We are now in a position to introduce the core model induction operators that we will need in this paper. 

\begin{definition}[Core model induction operator]\label{cmi operator} Suppose $\card{\bR}=\k$, $(\P, \Sigma)$ is a hod pair below $\k^+$ such that $\card{\P}<\k$, $a\in HC$, $\M\insegeq \mathcal{W}^{\k^+}(a)$ such that $\rho(\M)=a$ and $\Lambda$ is $\M$'s unique strategy. We say $F\in dom(Code_{\k^+})$ is a $(\Sigma, \Lambda)$ core model induction operator or just $(\Sigma, \Lambda)$-cmi operator if one of the following holds: For some $\a\in Ord$
\begin{enumerate}
\item  letting $M=\S^{\k^+, \Lambda}_{\omega}||\a$, $M\models AD^++MC(\Sigma)$\footnote{$MC(\Lambda)$ stands for the Mouse Capturing relative to $\Lambda$ which says that for $x, y\in \bR$, $x$ is $OD(\Lambda, y)$ iff $x$ is in some $\Lambda$-mouse over $y$.} and one of the following holds:
\begin{enumerate}
 \item $F$ is a $\Sigma$-mouse operator which transfers and relativizes well. 
\item For some swo $b\in HC$ and some $\Sigma$-premouse $\Q\in HC^{V}$ over $b$, $F$ is an $(\omega_1, \omega_1)$-iteration strategy for $\Q$ which is $(\powerset(\mathbb{R}))^{M}$-fullness preserving and $\a$ ends either a weak or a strong gap in the sense of \cite{NamScales}. 
\item  For some $H\in dom(Code_{\k^+})$, $H$ satisfies a or b above and for some $n<\omega$, $F$ is $x\rightarrow \M^{\#, H}_n(x)$ operator or for some $b\in HC$, $F$ is the $(\k^+, \k^+)$-iteration strategy of $\M_n^{\#, H}(b)$.
\end{enumerate}
\item The above conditions hold for $F$ with $L^\Lambda_{\k^+}(\bR)$ used instead of $\S^{\k^+, \Lambda}_{\omega}$ and $\Lambda$ used instead of $\Sigma$.
\end{enumerate}
We say $F$ is a $\Sigma$-cmi operator if for some $\Lambda$, $F$ is a $(\Sigma, \Lambda)$-cmi operator.
\end{definition}

When $\Sigma=\emptyset$ then we omit it from our notation. Often times, when doing core model induction, we have two uncountable cardinals $\k<\l$ and we need to show that cmi operators in $dom(Code_\k)$ are $\l$-extendable. We also need to know that given any $\l$-cmi operator $F$, $\M_1^{\#, F}$-exists. We make these statements more precise.

\begin{definition}[Lifting cmi operators]\label{lifting cmi operators} Suppose $\k<\l$ are two cardinals such that $\k$ is an inaccessible cardinal and suppose $(\P, \Sigma)$ is a hod pair below $\k$.\begin{enumerate}
\item Lift$(\k, \l, \Sigma)$ is the statement that $\Sigma$ is $\l$-extendable and for every generic $g\subseteq Coll(\omega, <\k)$, in $V[g]$, every $\Sigma^g$-cmi operator $F$ is $\l$-extendable. If Lift$(\k, \l, \Sigma)$ holds, $g\subseteq Coll(\omega, <\k)$ is generic, and $F$ is a $\Sigma$-cmi operator $F$ then we let $F^\l$ be its lifted version.
\item We let Proj$(\k, \l, \Sigma)$ be the conjunction of the following statements: for every generic $g\subseteq Coll(\omega, <\k)$, in $V[g]$,
\begin{enumerate}
\item  for every $\Sigma^g$-cmi operator $F$ which is $\l$-extendable, $\M_1^{\#, F}$ exists and is $\l$-iterable via a $\l$-extendable strategy. 
\item for every $a\in H_{\omega_2}$, $\mathcal{K}^{\omega_1, \Sigma, g}(a)=\mathcal{\W}^{\l, \Sigma, g}(a)$
\end{enumerate}
\end{enumerate}
\end{definition}

Recall that under $AD$, if $X$ is any set then $\theta_X$ is the least ordinal which isn't a surjective image of $\bR$ via an $OD_X$ function. 

\begin{theorem}\label{the cmi theorem} Suppose $\k<\l$ are two uncountable cardinals and suppose $(\P, \Sigma)$ is a hod pair below $\k$ such that Lift$(\k, \l, \Sigma)$ and Proj$(\k, \l, \Sigma)$ hold. Then for every generic  $g\subseteq Coll(\omega, <\k)$, one of the following holds: 
\begin{enumerate}
\item  $\mathcal{S}_{\k, g}^{\l, \Sigma}\models AD^++\theta_\Sigma=\Theta$.
\item There is $A\subseteq \bR$ such that $\Sigma^g\in L(A, \bR)$, $L(A, \bR)\models AD^++MC(\Sigma^g)+\theta_{\Sigma^g}<\Theta$ and $(Lp^\Sigma(\bR))^{L(A, \bR)}\insegeq \S_{\k, g}^{\l, \Sigma}$.
\end{enumerate}
\end{theorem}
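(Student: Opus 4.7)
The argument is a core model induction in $V[g]$ in the style of \cite{CMI} and \cite{PFA}. Fix $g\subseteq Coll(\omega,<\k)$ generic, and set $\bR^{*}=\bR^{V[g]}$ and $\S=\S_{\k,g}^{\l,\Sigma}=L(\K^{\l,\Sigma,g}(\bR^{*}))$. One inducts simultaneously on ordinals $\alpha\le o(\S)$ on two statements: that $J_\alpha(\K^{\l,\Sigma,g}(\bR^{*}))\models AD^{+}+MC(\Sigma^{g})$, and that every $\Sigma^{g}$-cmi operator $F$ arising by stage $\alpha$ (in the sense of \rdef{cmi operator}) is $\l$-extendable with $\M_1^{\#,F^\l}$ existing and being $\l$-iterable. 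The hypothesis Lift$(\k,\l,\Sigma)$ delivers the $\l$-extensions of the strategies, while Proj$(\k,\l,\Sigma)$ supplies the existence and $\l$-iterability of $\M_1^{\#,F^\l}$ and the identification $\K^{\omega_1,\Sigma,g}(a)=\W^{\l,\Sigma,g}(a)$ for $a\in H_{\omega_2}^{V[g]}$; this last identification is exactly what allows the stacking and $(\Gamma,\Sigma)$-suitable machinery of Section~1 to compute the full $\l$-stack from countable inputs.

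The inductive step is organized by the gap analysis of $\S$ from \cite{K(R)} and \cite{NamScales}. Fix a gap $[\alpha,\beta]$ of $\S$. Inside the gap, the pointclasses are closed under the existing cmi operators, and the scales analysis produces scales on a universal set at the end of the gap. At the end of a weak gap, the associated term relations combine with \rthm{existence of quasi-iterable premice} and the $(\Gamma,\Sigma)$-suitable framework of Section~1.2 to give a new $\Sigma^{g}$-mouse operator that transfers and relativizes well, matching \rdef{cmi operator}(1)(a). At the end of a strong gap one instead passes to the iteration strategy of the minimal hybrid mouse over a set of reals that realizes the scales analysis, yielding an operator of type \rdef{cmi operator}(1)(b). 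One then closes under $\M_n^{\#,\cdot}$ as in \rdef{cmi operator}(1)(c). Lift$(\k,\l,\Sigma)$ and Proj$(\k,\l,\Sigma)$ together lift the new operator to an $\l$-extendable one whose $\M_1^{\#}$ is $\l$-iterable, so that the induction continues past $\beta$.

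Two outcomes are possible. Either the induction succeeds at every $\alpha\le o(\S)$, in which case every $OD_{\Sigma^{g}}$ set of reals in $\S$ is captured by a cmi operator, $MC(\Sigma^{g})$ holds throughout $\S$, and computing $V_\Theta^{\H_{\Sigma^{g}}}$ via the direct limit of \rthm{hod theorem} gives $\S\models AD^{+}+\theta_\Sigma=\Theta$, which is alternative (1). Or the induction breaks at a least ordinal $\alpha_0$; the obstruction is then a set $A\subseteq\bR^{*}$ of minimal Wadge rank that is $OD_{\Sigma^{g}}$ at that level but for which no cmi operator of the next type can be constructed. Minimality of $A$ together with the scales analysis yields $L(A,\bR^{*})\models AD^{+}+MC(\Sigma^{g})+\theta_{\Sigma^{g}}<\Theta$, and the agreement of the $Lp^\Sigma$-stacks constructed inside $L(A,\bR^{*})$ with those constructed in $V[g]$ below $\alpha_0$, via \rprop{facts on stacks}, yields $(Lp^\Sigma(\bR))^{L(A,\bR^{*})}\insegeq\S$, which is alternative (2).

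The main technical obstacle is propagating iterability at end-of-gap stages. After the scales analysis, defining the next operator $F$ is essentially routine; the delicate point is to verify that $\M_1^{\#,F^\l}$ is $\l$-iterable (not merely $\omega_1$-iterable) and that its strategy is itself $\l$-extendable, so the induction can continue. This is precisely the role of Proj$(\k,\l,\Sigma)$(a), deployed via countable elementary substructures of $H_\l$ together with the branch condensation of $\Sigma$ to pull back full $\l$-iteration strategies from hulls to the whole model, in the manner of \cite{CMI} and \cite{PFA}.
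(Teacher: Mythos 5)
The paper does not actually prove \rthm{the cmi theorem}; it states explicitly that ``there are no new ideas in the proof'' and cites Theorems~2.4 and 2.6 of \cite{StrengthPFA1}, Chapter~7 of \cite{CMI}, and \cite{PFA}. Your sketch is a fair blind reconstruction of the standard core model induction that those references carry out, and the division of labor you assign to Lift$(\k,\l,\Sigma)$ (extensions of strategies and operators) and Proj$(\k,\l,\Sigma)$ ($\M_1^{\#,F^\l}$ and the $\K^{\omega_1,\Sigma,g}=\W^{\l,\Sigma,g}$ identification) matches the roles the hypotheses are designed to play.

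One place your account deviates from the intended organization of the dichotomy, and it is worth flagging because the paper's own remark following the theorem makes it precise. You phrase alternative~(2) as the case in which ``the induction breaks at a least ordinal $\alpha_0$'' and an obstruction set $A$ of minimal Wadge rank appears. In fact the core model induction does not break: the gap analysis and the Lift/Proj machinery carry it through $\S$, so $\S\models AD^{+}$ unconditionally. The dichotomy is instead about what happens to the $\Sigma^{g}$-iteration strategies $\Lambda$ that the induction generates. The paper observes that under the hypotheses, whenever $\Lambda\in V[g]$ is an iteration strategy of a $\Sigma$-mouse $\M$ over an swo $a\in HC^{V[g]}$ with $\rho(\M)=a$, then $L^{\Lambda}(\bR^{V[g]})\models AD^{+}$. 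Alternative~(2) occurs exactly when some such $\Lambda$ is cofinal past $\theta_{\Sigma^{g}}$, in which case $A=Code(\Lambda)$ witnesses (2); and if no such $\Lambda$ exists, then $\S\models ``V=OD_{\Sigma,\bR}"$, which is why $\theta_\Sigma=\Theta$ is actually redundant in clause~(1). So the set $A$ is not an obstruction at which the induction stalls but the code of an iteration strategy produced \emph{by} the induction, and the statement ``L(A,\bR)\models MC(\Sigma^{g})$'' and ``$(Lp^\Sigma(\bR))^{L(A,\bR)}\insegeq\S$'' are verified from that vantage point rather than from a minimal-Wadge-rank argument. Apart from this reframing, your outline is consistent with the cited proofs.
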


The proof of the theorem is very much the proof of the core model induction theorems in \cite{StrengthPFA1} (see Theorem 2.4 and Theorem 2.6), \cite{CMI} (see Chapter 7) and \cite{PFA}. Since there are no new ideas in the proof of \rthm{the cmi theorem} we omit the proof. One remark is that under the hypothesis of \rthm{the cmi theorem},  whenever $\Lambda\in V[g]$ is an iteration strategy of some $\Sigma$-mouse $\M$ over some swo $a\in HC^{V[g]}$ with the property that $\rho(\M)=a$ then $L^\Lambda(\bR^{V[g]})\models AD^+$. It then follows that if clause 2 fails then $\mathcal{S}_{\k, g}^{\l, \Sigma}\models ``V=OD_{\Sigma, \bR}$" and in particular, $\mathcal{S}_{\k, g}^{\l, \Sigma}\models \theta_\Sigma=\Theta$. Hence, we could have omitted it from clause 1. 

The following is a useful fact on lifting strategies.

\begin{lemma}[Lifting cmi operators through strongness embeddings]\label{lifting cmi operators through strongness embeddings}
Suppose $\k<\l$ are such that $\k$ is a $\l$-strong cardinal. Then whenever $(\P, \Sigma)$ is a hod pair below $\k$ which is $\k$-extendable then Lift$(\k, \l, \Sigma)$ and clause b of Proj$(\k, \l, \Sigma)$ hold. 
\end{lemma}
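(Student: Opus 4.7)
The plan is to use an elementary embedding $j \colon V \to M$ witnessing the $\l$-strongness of $\k$ — i.e.\ $\mathrm{crit}(j) = \k$, $j(\k) > \l$, $V_\l \subseteq M$ — and push everything through $j$. Because $\P \in H_\k$, $j$ fixes $\P$, and by elementarity $M \models\;$``$j(\Sigma)$ is a $j(\k)$-iteration strategy on $\P$ with hull condensation that is $j(\k)$-extendable''.

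For Lift$(\k, \l, \Sigma)$, fix a $V$-generic $g \subseteq Coll(\omega, <\k)$. Since $Coll(\omega, <\k) \in V_\k \subseteq M$, $g$ is also $M$-generic, so there is a unique $j(\Sigma)^g \in M[g]$ acting on iteration trees of size $<j(\k)$ in $M[g]$. Any iteration tree $\T$ on $\P$ of size $<\l$ in $V[g]$ has a $Coll(\omega, <\k)$-name in $V_\l \subseteq M$, hence $\T \in M[g]$; set $\Psi(\T) := j(\Sigma)^g(\T)$. The key point is that, by hull condensation, $\Psi(\T)$ is also characterized as the unique cofinal branch of $\T$ whose restrictions to countable hulls of $\T$ are picked by $\Sigma^g$ — a characterization intrinsic to $V[g]$. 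Thus $\Psi \in V[g]$, and uniqueness follows from the same characterization. The argument for a $\Sigma^g$-cmi operator $F$ is structurally identical: each clause of \rdef{cmi operator} makes $F$ uniformly definable from smaller data lying in $V_\l \cap M$ via the relativizes-well/transfers-well properties together with hull condensation of the underlying strategies, so $j(F)$ suitably restricted yields the unique $\l$-extension of $F$ in $V[g]$.

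For clause (b) of Proj$(\k, \l, \Sigma)$, namely $\K^{\omega_1, \Sigma, g}(a) = \W^{\l, \Sigma, g}(a)$ for $a \in H_{\omega_2}^{V[g]}$, the direction $\W^{\l,\Sigma,g}(a) \insegeq \K^{\omega_1,\Sigma,g}(a)$ is immediate from \rprop{facts on stacks}. For the converse, let $\N$ be a countably iterable sound $\Sigma$-mouse over $a$ with $\rho(\N) = a$, and let $\T$ be an iteration tree on $\N$ of size $<\l$ in $V[g]$. As before $\T \in M[g]$; in $M$, $j(\N)$ is countably iterable by elementarity, and the standard hull-collapse-and-pull-back argument combined with hull and branch condensation of $j(\Sigma)$ (which agrees with $\Sigma$ below $\k$) yields a cofinal well-founded branch of $\T$, canonically and coherently across choices of hull. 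This assembles into a $\l$-iteration strategy for $\N$ in $V[g]$.

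The main obstacle is verifying that the extensions built via $j$ actually live in $V[g]$ rather than just in $M[g]$, and that they are unique. For $\Sigma$ itself the intrinsic reformulation in terms of countable hulls plus hull condensation does this; for cmi operators the uniformity is supplied by the ``relativizes well'' and ``transfers well'' clauses of \rdef{cmi operator}; and in clause (b) the delicate step is upgrading countable iterability to $<\l$-iterability, which requires branch condensation of $\Sigma$ acting in concert with the $\l$-strongness reflection, not merely a formal transfer along $j$.
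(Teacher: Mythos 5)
Your overall plan---push everything through a strongness embedding $j\colon V\to M$ with $V_\l\subseteq M$, then use condensation to argue the lifted operator lives in $V[g]$ and is unique via countable hulls---is the same core idea as the paper's proof. But there is a concrete deviation that creates a gap. The paper does \emph{not} work with $M[g]$. It first fixes a $V$-generic $h\subseteq Coll(\omega,<j(\k))$ with $g=h\cap Coll(\omega,<\k)$ and lifts $j$ to a fully elementary $j^+\colon V[g]\to M[h]$. Then the candidate extension is simply $F^+=j^+(F)\rest H_\l[g]$, an object that automatically exists and makes sense by elementarity of $j^+$. You instead assert ``there is a unique $j(\Sigma)^g\in M[g]$ acting on iteration trees of size $<j(\k)$ in $M[g]$.'' This does not follow from $M\models$``$j(\Sigma)$ is $j(\k)$-extendable'': by \rdef{cmi operator}'s conventions, $j(\k)$-extendability of $j(\Sigma)$ (as an element of $dom(Code_{j(\k)}^M)$) concerns forcing with $Coll(\omega,<j(\k))$, not the much smaller $Coll(\omega,<\k)$. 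The existence of a canonical extension in $M[g]$ after only the small collapse would itself need an argument (compare \rprop{facts on stacks}, which \emph{hypothesizes} rather than proves that $\Sigma$ has a unique extension after a small collapse). The paper's route via $j^+$ and $h$ sidesteps this entirely.

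Two further remarks. First, your hull-condensation characterization (``the unique cofinal branch whose restrictions to countable hulls are picked by $\Sigma^g$'') is fine for the strategy component, and it recovers the paper's Skolem-hull uniqueness argument. But for cmi operators that are mouse operators (clause 1(a) of \rdef{cmi operator}) rather than strategies, ``hull condensation of the underlying strategies'' is not the right engine; the paper's uniqueness argument works uniformly because it uses the condensation property built into the \emph{definition} of a $\Sigma$-cmi operator, not hull condensation per se. Second, the paper's argument that $F^+\in V[g]$ proceeds by observing that $F$ is determined piecewise on bounded levels $H_\b^{V[g\cap Coll(\omega,<\a)]}$, each piece living in a bounded intermediate extension; this is closer in flavor to a local-definability argument than to the single global hull-characterization you propose. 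So: same approach at the top level, but you should incorporate the lift $j^+\colon V[g]\to M[h]$ before you take images, and be careful not to lean on hull condensation alone when $F$ is a mouse operator.
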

\begin{proof}
Fix an embedding $j:V\rightarrow M$ witnessing $\k$ is $\l$-strong. We only show that Lift$(\k, \l, \Sigma)$ holds as the proof of clause b of Proj$(\k, \l, \Sigma)$ is very similar. Let $g\subseteq Coll(\omega, <\k)$ and $h\subseteq Coll(\omega, <j(\k))$ be $V$-generic such that $g=h\cap Coll(\omega, <\k)$. We can then extend $j$ to $j^+: V[g]\rightarrow M[h]$.

Working in $V[g]$, fix $F$ which is a $\Sigma^g$-cmi operator. We want to show that $F$ is $\l$-extendable. Let then $F^+=j^+(F)\rest H_\l[g]$. Because for each $\a<\b<\k$ such that $\M_F\in H_\a$, $F^g\rest H_\b^{V[g\cap Coll(\omega, <\a)]}\in V[g\cap Coll(\omega, <\a)]$ we have that $F^+\in V[g]$ and it extends $F$. 

Next, we need to see that there is a unique such $F^+$. Suppose then $H\in dom(Code_\l^{V[g]})$ is another extension of $F$. Because $\k=\omega_1^{V[g]}$, a simple Skolem hull argument gives a contradiction. Indeed, working in $V[g]$, let $\pi: N\rightarrow H_{\l^+}[g]$ be an elementary such that $N$ is countable and $F^+, H\in rng(\pi)$. Let $(\bar{F}, \bar{H})=\pi^{-1}(F^+, H)$. Then it follows from the definition of being a $\Sigma$-cmi operator that $\bar{F}=F^+\rest N$ and $\bar{H}=H\rest N$. However, since $F^+\rest N=F\rest N=H\rest N$, we get that $N\models \bar{F}=\bar{H}$, contradiction! 
\end{proof}

\section{A core model induction at a strong cardinal}

In this section we present a useful application of \rthm{the cmi theorem} which we will later use to prove our main theorem. 

\begin{theorem}\label{main technical theorem} Suppose $\mu<\k<\l$ are such that $\l$ is an inaccessible cardinal, $\mu$ and $\k$ are $\l$-strong and whenever $(\P, \Sigma)$ is a hod pair below $\k$ such that $\l^\P=0$, Proj$(\k, \l, \Sigma)$ holds. Suppose $g\subseteq Coll(\omega,<\k)$ is generic.  Then in $V[g]$, there is $A\subseteq \bR$ such that $L(A, \bR)\models \theta_0<\Theta$.
\end{theorem}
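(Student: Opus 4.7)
The plan is to apply Theorem \ref{the cmi theorem} to a transfinite sequence of hod pairs $(\P_\a, \Sigma_\a)$ below $\k$, all with $\l^{\P_\a}=0$, starting from the trivial pair $(\P_0, \Sigma_0)=(\emptyset,\emptyset)$. At every stage the hypotheses of Theorem \ref{the cmi theorem} are met: Lemma \ref{lifting cmi operators through strongness embeddings}, applied to $\k$ being $\l$-strong, yields $\mathrm{Lift}(\k,\l,\Sigma_\a)$ and clause (b) of $\mathrm{Proj}(\k,\l,\Sigma_\a)$, while clause (a) of $\mathrm{Proj}(\k,\l,\Sigma_\a)$ is exactly the standing assumption of Theorem \ref{main technical theorem} (applied to the pair $(\P_\a,\Sigma_\a)$, which has $\l^{\P_\a}=0$).

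First I would apply Theorem \ref{the cmi theorem} at $\Sigma_0 = \emptyset$. If clause 2 obtains, we get $A\subseteq \bR$ with $L(A,\bR)\models AD^+ + \theta_{\emptyset^g} < \Theta$; since $\theta_{\emptyset^g} = \theta_0$, this already gives $L(A,\bR)\models\theta_0<\Theta$ and we are done. Otherwise clause 1 holds, i.e., $\S^{\l,\emptyset}_{\k,g}\models AD^+ + \theta_0 = \Theta$. In this case I would use the second $\l$-strong cardinal $\mu<\k$ to construct a nontrivial hod pair $(\P_1,\Sigma_1)$ below $\k$ with $\l^{\P_1}=0$ and $\theta_{\Sigma_1^g} > \theta_0$ in the appropriate maximal $AD^+$ context, by reflecting fragments of $\S^{\l,\emptyset}_{\k,g}$ through an embedding $j\colon V\to N$ witnessing that $\mu$ is $\l$-strong (so $V_\l\subseteq N$) and extracting a $\l$-extendable iteration strategy $\Sigma_1$ with hull and branch condensation. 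Re-apply Theorem \ref{the cmi theorem} to $\Sigma_1$: clause 2 yields $L(A,\bR)\models\theta_{\Sigma_1^g}<\Theta$ and hence $\theta_0<\Theta$ in $L(A,\bR)$ (since $\theta_0\le \theta_{\Sigma_1^g}$ in any $AD^+$ extension), finishing the proof; clause 1 pushes us to construct $\Sigma_2$, and so on, taking appropriate direct limits at limit ordinals.

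The main obstacle is the construction of the successor strategies $\Sigma_{\a+1}$ in the clause-1 case. Each must be a genuine $\l$-extendable iteration strategy of a hod premouse in $H_\k$ with $\l^{\P_{\a+1}}=0$, possess hull and branch condensation, and satisfy $\theta_{\Sigma_{\a+1}^g} > \theta_{\Sigma_\a^g}$ in the relevant maximal $AD^+$ context, so that strict monotonicity together with boundedness of Wadge ranks in the maximal $AD^+$ model forces the sequence to terminate at a clause-2 outcome. Carrying out this construction is where the $\l$-strongness of $\mu$ is used essentially, via a reflection argument in the style of \cite{StrengthPFA1}, \cite{CMI}, and \cite{PFA}, adapted to the hybrid-mouse context outlined in the Preliminaries. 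A secondary but routine technical point is the bookkeeping at limit stages, which takes direct limits of the hod pairs and verifies preservation of $\l^\P=0$, hull and branch condensation, and $\l$-extendability; this too follows the pattern of the cited core model induction references.
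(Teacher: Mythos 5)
The central gap is that your proof defers precisely the content of the paper's argument. You say ``construct a nontrivial hod pair $(\P_1,\Sigma_1)$ below $\k$ with $\l^{\P_1}=0$ and $\theta_{\Sigma_1^g}>\theta_0$... via a reflection argument in the style of [the references],'' but this construction \emph{is} the proof: the paper devotes Lemmas \ref{correctness facts}, \ref{getting qsjs}, \ref{no bad pair}, and the lemma following it to exactly this. Concretely, the paper does not simply reflect through a $\l$-strongness embedding at $\mu$; it collapses below $\mu$ (not $\k$), shows the maximal model $\S^\l_{\mu,g}$ satisfies $AD^++\Theta=\theta_0$, takes $\P=(\M_\infty)^{\S^\l_{\mu,g}}$, uses the $(\mu,\l)$-extender $j$ to extract a quasi-self-justifying-system $j^+[\Gamma^*]$ for $\P$ (Lemma \ref{getting qsjs}), derives from it a fullness-preserving strategy $\Sigma$ living in $V[k]$, and then, via a delicate capturing argument with precipitous ideals (Lemmas \ref{no bad pair} and the following lemma), shows that some tail of $\Sigma$ has branch condensation. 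None of this is ``routine'' and none of it is supplied by your sketch; the phrase ``extracting a $\l$-extendable iteration strategy $\Sigma_1$ with hull and branch condensation'' names the hard part without giving an argument.

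Two secondary problems. First, the transfinite iteration is not only unsupplied but unnecessary: once a single nontrivial fullness-preserving hod pair $(\P_1,\Sigma_1)$ with branch condensation is in hand, $L(\Sigma_1^g,\bR^{V[g]})\models AD^++\theta_0<\Theta$ immediately (this is exactly the concluding step of the paper, where $(*)$ is contradicted), so both clause 1 and clause 2 of Theorem \ref{the cmi theorem} applied to $\Sigma_1$ yield the conclusion, and there is no stage $2$ to proceed to. Second, your termination argument for the iteration (``strict monotonicity together with boundedness of Wadge ranks in the maximal $AD^+$ model'') is not coherent as stated, because the maximal model $\S^{\l,\Sigma_\a}_{\k,g}$ changes with $\a$ and there is no single Wadge hierarchy in which the ranks $\theta_{\Sigma_\a^g}$ all live and are bounded. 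The paper avoids all of this by running a single proof by contradiction: assume $(*)$, build one pair, contradict $(*)$.
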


We present the proof of \rthm{main technical theorem} in a sequence of lemmas. Fix then $\mu<\kappa<\l$ as in \rthm{main technical theorem}. Towards a contradiction we assume that 
\begin{center}
(*)\ \ \ \ \ \ \ for any generic $h\subseteq Coll(\omega, <\k)$, in $V[h]$, there is no $A\subseteq \bR$ such that $L(A, \bR)\models \theta_0<\Theta$.
\end{center}

Fix a $V$-generic $g\subseteq Coll(\omega, <\mu)$ and let $E$ be a $(\mu, \l)$-extender such that $\cp(E)=\mu$ and $V_{\l}\subseteq Ult(V, E)$. We let $j=j_E$ and $M=Ult(V, E)$. Also fix a $V$-generic $h\subseteq Coll(\omega, <j(\mu))$ such that $h\cap Coll(\omega, <\mu)=g$. It then follows that $j$ lifts to $j^+: V[g]\rightarrow M[h]$. For the rest of this section we let $W=V[g]$. The next lemma shows that the various models that we have defined compute the stacks we have defined correctly. Below, if $\xi\in Ord$ and $N$ is a transitive model of $ZFC$ then we let $N_\xi=V_\xi^N$. Among other things the next lemma can be used to show clause b of Proj$(\k, \l, \Sigma)$.

\begin{lemma}\label{correctness facts} Suppose $(\P, \Sigma)$ is a hod pair below $\mu$ and $a\in W_\l$ is an swo. Then
\begin{center}
$\W^{\l, \Sigma, g}(a)=\K^{\l, \Sigma, g}(a)=(Lp^\Sigma(a))^W=(\W^{j(\l), j(\Sigma), h}(a))^{M[h]}$.
\end{center}
\end{lemma}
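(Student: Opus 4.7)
The trivial inclusions $\W^{\l,\Sigma,g}(a)\insegeq \K^{\l,\Sigma,g}(a)\insegeq (Lp^\Sigma(a))^W$ are immediate from \rprop{facts on stacks} applied inside $W$. I would first invoke (the proof of) \rlem{lifting cmi operators through strongness embeddings}, applied with $\mu$ in place of $\k$, to ensure that $\Sigma$ is $\l$-extendable and that the two upper stacks are well-formed. The remaining work then splits into (A) the reverse inclusion $(Lp^\Sigma(a))^W\insegeq \W^{\l,\Sigma,g}(a)$, and (B) the comparison of the common value with $(\W^{j(\l),j(\Sigma),h}(a))^{M[h]}$.

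For (A), let $\N\insegeq (Lp^\Sigma(a))^W$ be a countably iterable sound $\Sigma$-mouse over $a$ with $\rho(\N)=a$; I want to produce a $\l$-iteration strategy for $\N$ in $W$. Given a tree $\T\in W$ on $\N$ of length $<\l$, I would take a countable elementary $\pi:\bar X\to H_\theta^W$ with $\{\T,\N,a\}\subseteq \rge(\pi)$ and pull back to $(\bar\T,\bar\N)$. Countable iterability provides an iteration strategy for $\bar\N$ as a $\Sigma^\pi$-mouse, which chooses a cofinal well-founded branch $\bar b$ of $\bar\T$. Branch condensation of $\Sigma$, together with $\l$-extendability, then yields a cofinal well-founded branch $b$ of $\T$ that does not depend on the choice of hull $\pi$. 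The assignment $\T\mapsto b$ is the desired $\l$-strategy.

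For (B), the same characterization applies inside $M[h]$: $(\W^{j(\l),j(\Sigma),h}(a))^{M[h]}$ is the union of countably iterable sound $j(\Sigma)^h$-mice over $a$ in $M[h]$ with $\rho=a$. Since $V_\l\subseteq M$ and $a\in W_\l$, any such mouse lies in both $W$ and $M[h]$; moreover $j(\Sigma)^h\rest H_\l^V = \Sigma\rest H_\l^V$ by the uniqueness clause in the definition of $\l$-extendability, so the notions of $\Sigma$-premouse in $W$ and $j(\Sigma)^h$-premouse in $M[h]$ agree below $\l$. Countable iterability is then absolute between $W$ and $M[h]$ because it is computed from countable hulls $\bar\N\in HC^W\cap HC^{M[h]}$, whose unique branch-condensation strategies as $\Sigma^\pi$- and as $j(\Sigma)^\pi$-mice coincide by elementarity of $j^+:W\to M[h]$.

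The main obstacle is the coherence argument in (A): one must verify that branches chosen through different countable hulls really do cohere into a single iteration strategy, and that the lifted branch is genuinely a correct branch of the large tree $\T$. This is precisely where branch condensation and $\l$-extendability of $\Sigma$ carry the load, and the $\l$-strongness of $\mu$ enters through $j$ to supply the generic absoluteness needed to identify the pulled-back branch as the unique $\Sigma$-correct branch. Once (A) is established, step (B) follows essentially from elementarity of $j^+$ together with the agreement of $\Sigma$ and $j(\Sigma)$ on $H_\l^V$.
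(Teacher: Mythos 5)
Your trivial inclusions via Proposition~\ref{facts on stacks} and the reduction to two equalities match the paper, but the heart of step~(A) has a genuine gap. You take a countable elementary $\pi:\bar X\to H_\theta^W$ with $\T\in\rge(\pi)$, pull back to $\bar\T$, and let countable iterability of $\N$ supply a branch $\bar b$ of $\bar\T$. The problem is that $\bar\T$ has countable length while $\T$ may have length anywhere below $\l$, so $\pi[\bar b]$ is not a cofinal branch of $\T$; ``branch condensation together with $\l$-extendability then yields a cofinal well-founded branch $b$ of $\T$'' is exactly the step one needs to justify, and you have not indicated a mechanism. In general, countable iterability of a premouse does not by itself yield a $\l$-iteration strategy, and the Lemma you are proving is precisely the assertion that in this particular setting it does. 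The bridge in the paper is not a hull argument carried out entirely inside $W$: rather, the paper moves to $M[h]$ (where $\card{\M}<j(\mu)=\omega_1^{M[h]}$, so $\M$ is countable), observes that by absoluteness there is $\sigma:\M\to j^+(\M)$ in $M[h]$ with $\sigma(\P)=\P$ and $j(\Sigma^g)^\sigma=j(\Sigma^g)$ (branch condensation), and concludes from $j^+(\M)\insegeq(Lp^{j^+(\Sigma^g)}(j^+(a)))^{M[h]}$ that $\M$ is $(\omega_1+1)$-iterable in $M[h]$. Then homogeneity of the collapse and uniqueness of that strategy $\Lambda$ give $\Lambda\rest H_\l^W\in W$, which is the desired $\l$-strategy. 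Your closing sentence gestures at $j$ supplying ``generic absoluteness,'' but the actual role of $j^+$ is to produce the realization $\sigma$ and the strategy of $j^+(\M)$, not to cohere hulls taken in $W$.

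For step~(B) the issue is milder but present: you assert that any mouse in $(\W^{j(\l),j(\Sigma),h}(a))^{M[h]}$ ``lies in both $W$ and $M[h]$,'' but membership in $W$ is not automatic for a set constructed in $M[h]$; the paper gets it from homogeneity of the collapse together with uniqueness of the mouse's strategy (as in step~(A)), and you should say so explicitly. The forward inclusion of (B) in the paper also goes through $j$: from $\M\insegeq\W^{\l,\Sigma,g}(a)$ one gets $j(\M)\insegeq\W^{j(\l),j(\Sigma),h}(j^+(a))$ and then uses the realization $\sigma:\M\to j^+(\M)$ again. So both directions of (B) lean on the same $\sigma$-plus-homogeneity machinery; your agreement-of-restrictions observation is correct but does not on its own move mice between the two models.
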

\begin{proof}
It is enough to show that $\W^{\l, \Sigma, g}(a)=(Lp^\Sigma(a))^W$ and $\W^{\l, \Sigma, g}(a)=(\W^{j(\l), j(\Sigma), h}(a))^{M[h]}$. We start with the first. Work in $W$. Clearly $\W^{\l, \Sigma, g}(a)\insegeq (Lp^\Sigma(a))^W$. Let then $\M\insegeq (Lp^\Sigma(a))^W$ be such that $\rho(\M)=a$. We want to see that $\M\insegeq \W^{\l, \Sigma, g}(a)$. To see this, notice that by a standard absoluteness argument, there is $\sigma: \M\rightarrow j^+(\M)$ such that $\sigma\in M[h]$, $\sigma(\P)=\P$ and $M[h]\models j(\Sigma^g)^\sigma=j(\Sigma^g)$ (this follows from the fact that $\Sigma$ has branch condensation). Hence, in $M[h]$, $\M$ is $\omega_1+1$-iterable $j(\Sigma^g)$-mouse. Let in $M[h]$, $\Lambda\in M[h]$ be the unique $\omega_1+1$-iteration strategy of $\M$ (as a $j(\Sigma^g)$-mouse). It follows from the homogeneity of the collapse and the uniqueness of $\Lambda$ that $\Lambda\rest H_{\l}^W\in W$. Hence, $\M\insegeq \W^{\l, \Sigma, g}(a)$. 

To see that $\W^{\l, g}(a)=(\W^{j(\l), j(\Sigma), h}(a))^{M[h]}$, first suppose $\M\insegeq \W^{\l, \Sigma, g}(a)$. Then, in $M[h]$, $j(\M)\insegeq \W^{j(\l), j(\Sigma), h}(j^+(a))$. Since, in $M[h]$, $\M$ is embeddable into $j^+(\M)$ via $\sigma$ with the above properties, we get that in $M[h]$, $\M\insegeq \W^{j(\l), j(\Sigma), h}(a)$.  Next, suppose $\M\insegeq (\W^{j(\l), j(\Sigma), h}(a))^{M[h]}$ is such that $\rho(\M)=a$. It follows from the homogeneity of the collapse and the uniqueness of the strategy of $\M$ that $\M\in W$ and that $\M\insegeq \W^{\l, \Sigma, g}(a)$.
\end{proof}

Because we are assuming (*), it follows from  \rthm{the cmi theorem} and \rlem{lifting cmi operators through strongness embeddings} that 

\begin{corollary} $\S^\l_{\mu, g}\models AD^++\Theta=\theta_0$.
\end{corollary}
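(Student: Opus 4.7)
The plan is to apply \rthm{the cmi theorem} with $\mu$ playing the role of the smaller cardinal and with the trivial hod pair $(\P,\Sigma)=(\emptyset,\emptyset)$. So first I would verify the two hypotheses $\mathrm{Lift}(\mu,\l,\emptyset)$ and $\mathrm{Proj}(\mu,\l,\emptyset)$, then deduce that one of the two alternatives in the conclusion holds in $V[g]$, and finally rule out the second alternative using (*).

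The $\l$-strongness of $\mu$ together with \rlem{lifting cmi operators through strongness embeddings} immediately yields $\mathrm{Lift}(\mu,\l,\emptyset)$ and clause (b) of $\mathrm{Proj}(\mu,\l,\emptyset)$. For clause (a) of $\mathrm{Proj}(\mu,\l,\emptyset)$, fix any cmi operator $F\in V[g]$ which is $\l$-extendable. Since $|g|<\mu<\k$, we have $F\in V[g]\subseteq V[h]$ for any extension $h\subseteq \mathrm{Coll}(\omega,<\k)$ of $g$, and since $F$ is $\l$-extendable, its (unique) extension $F^h$ in $V[h]$ is a $\l$-extendable cmi operator there. The hypothesis of \rthm{main technical theorem} applied to the trivial hod pair (which is a hod pair below $\k$ with $\l^\P=0$) gives $\mathrm{Proj}(\k,\l,\emptyset)$, hence $\M^{\#,F^h}_1$ exists in $V[h]$ with a $\l$-extendable strategy $\Lambda^h$. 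The homogeneity of $\mathrm{Coll}(\omega,[\mu,\k))$ together with the uniqueness of $\M^{\#,F^h}_1$ and $\Lambda^h$ then places $\M_1^{\#,F}$ and its strategy already in $V[g]$, verifying clause (a).

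With the hypotheses of \rthm{the cmi theorem} in place, the theorem gives that either (i) $\S^{\l}_{\mu,g}\models AD^++\theta_0=\Theta$, or (ii) there is $A\subseteq\mathbb{R}^{V[g]}$ with $L(A,\mathbb{R})^{V[g]}\models AD^++\theta_0<\Theta$ and $(Lp(\mathbb{R}))^{L(A,\mathbb{R})}\insegeq \S^{\l}_{\mu,g}$. To rule out (ii), extend $g$ to a $\k$-generic $h\subseteq \mathrm{Coll}(\omega,<\k)$. Using \rlem{correctness facts} together with the fact that in the presence of the $\l$-strong cardinal $\k$ the $Lp(\mathbb{R})$-hierarchy is computed uniformly across generic extensions, the witness $A\in V[g]$ can be transferred to a witness in $V[h]$ for the statement $L(A,\mathbb{R}^{V[h]})\models \theta_0<\Theta$, contradicting (*). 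Hence (i) holds, which is the claimed corollary.

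The main obstacle is the last step: pushing the failure of $\theta_0=\Theta$ from $V[g]$ to the $\k$-generic extension $V[h]$, where the set of reals has strictly grown. The delicate point is identifying $\S^{\l}_{\mu,g}$ (or enough of it) as an initial segment of a derived-model-style computation carried out in $V[h]$, so that the $A$ produced in $V[g]$ gives rise to a genuine $V[h]$-witness. This is a standard move in the core model induction literature, but it is where one must invoke both the correctness of stacks supplied by \rlem{correctness facts} and the homogeneity of the collapse.
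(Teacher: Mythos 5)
Your overall plan matches the paper's for the first part, but the decisive step --- ruling out alternative (ii) of \rthm{the cmi theorem} --- is where your argument has a genuine gap, and you even flag it yourself as the ``main obstacle.'' The issue is that $A\subseteq\mathbb{R}^{V[g]}$ is a set of reals of the smaller model, and ``transferring $A$ to a $V[h]$-witness'' is not a matter of correctness of stacks or homogeneity; $\mathbb{R}^{V[h]}$ is a strictly larger set and there is no canonical way to stretch an arbitrary $A\in V[g]$ to a set of reals of $V[h]$ that still computes $\theta_0<\Theta$. The paper does something qualitatively different. It applies the HOD analysis of \cite{ATHM} inside $L(A,\mathbb{R})$ to extract a hod pair $(\P,\Sigma)$ with $\l^\P=0$ (so $\Sigma$ is a genuinely nontrivial strategy, not a set of reals). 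That strategy --- unlike $A$ --- can be lifted: by \rlem{lifting cmi operators through strongness embeddings} it extends to a $(\l,\l)$-strategy $\Sigma^\l$, and \rlem{correctness facts} ensures $\Sigma^\l$ is fullness preserving in $V[g]$. One then restricts to $\Sigma^\k=\Sigma^\l\restriction H_\k^W$, checks $(\P,\Sigma^\k)$ is a hod pair below $\k$, and re-applies \rthm{the cmi theorem} at $\k$ with this as the base hod pair (Proj$(\k,\l,\Sigma^\k)$ being supplied by the hypothesis of \rthm{main technical theorem}). The output $\S^{\l,\Sigma^\k}_{\k,h^*}$ sees $\Sigma^\k$ as fullness preserving and so already satisfies $\theta_0<\Theta$, contradicting (*). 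So the transfer from $V[g]$ to $V[h]$ is effected at the level of an iteration strategy with branch condensation, not at the level of the set $A$; this is the missing idea.

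Two smaller remarks. First, your verification of clause (a) of Proj$(\mu,\l,\emptyset)$ by deriving it from Proj$(\k,\l,\emptyset)$ via homogeneity of the intermediate collapse is reasonable and in fact more explicit than what the paper writes (which only checks Lift$(\mu,\l)$ and leaves Proj implicit); this is fine. Second, the paper takes $\Sigma=\emptyset$ when invoking \rthm{the cmi theorem} at $\mu$, exactly as you do, so that part of your setup is correct.
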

\begin{proof} It follows from \rlem{lifting cmi operators through strongness embeddings} that Lift$(\mu, \l)$ holds. Suppose then that $\neg (\S^\l_{\mu, g}\models AD^++\Theta=\theta_0)$. Working in $V[g]$, using \rthm{the cmi theorem}, we can fix $A\subseteq \bR$ such that $L(A, \bR)\models \theta_{0}<\Theta$ and $(Lp(\bR))^{L(A, \bR)}\insegeq \S_{\mu, g}^{\l}$. By the results of \cite{ATHM}, there is $(\P, \Sigma)\in L(A, \bR)$ such that $\l^\P=0$ and letting $\mathcal{H}=\H^{L(A, \bR)}$, in $L(A, \bR)$, $\Sigma$ is fullness preserving, has branch condensation and 
\begin{center}
$\M_\infty|\theta_0=(V_{\theta_0}^{\mathcal{H}}, \vec{E}^{\M_\infty|\theta_0},  \in)$.
\end{center}

Using \rlem{lifting cmi operators through strongness embeddings}, we can extend $\Sigma$ to a $(\l, \l)$-strategy. Let $\Sigma^\l$ be this strategy. It follows from \rlem{correctness facts} that $\Sigma^\l$ is $\powerset(\cup_{\gamma<\lambda}\powerset(\gamma))$-fullness preserving in $V[g]$. Because Proj$(\k, \l)$-holds and because, letting $\Sigma^\k=\Sigma^\l\rest H_\k^W$, $(\P, \Sigma^\k)$ is a hod pair below $\k$, letting $h^*=h\cap Coll(\omega, <\k)$, it follows from \rthm{the cmi theorem} and (*), $\S^{\l, \Sigma^\k}_{\k, h^*}\models AD^+$. It then follows from \rlem{correctness facts} that $\S^{\l, \Sigma^\k}_{\k, h^*}\models ``\Sigma^\k$ is fullness preserving" implying that $\S^{\l, \Sigma^\k}_{\k, h^*}\models \theta_0<\Theta$. This contradicts (*).
\end{proof}

Let now $\P=(\M_\infty)^{\S^\l_{\mu, g}}$ and let in $V[g]$, $\Gamma=\{ A\subseteq \powerset(\l) :$ for some $\a\leq \l$, $Code_\l^{-1}(A)$ is a $(\a, \l)$-iteration strategy$\}$. Our ultimate goal is to produce, while working in $V[g]$, a premouse $\Q$ and a $(\lambda, \lambda)$-iteration strategy $\Sigma$ for $\Q$ such that $\Sigma$ is $\Gamma$-fullness preserving and has branch condensation. We start by describing a strategy for $\P$ which is fullness preserving. During the rest of this section, we drop $\Gamma$ from our notation. Thus, a suitable premouse is $\Gamma$-suitable premouse and etc. Let $k=h\cap Coll(\omega, <\l)$, $\S=\S^\l_{\mu, g}$ and $\Gamma^*=(F_{od})^\S$. 

\begin{lemma}\label{getting qsjs} $j^+[\Gamma^*]$ is a qsjs for $j^+(S(\Gamma^*))$ as witnessed by $\P$ . 
\end{lemma}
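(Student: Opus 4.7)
The plan is to transfer the qsjs property from $\S$, where it holds by construction, to $M[h]$ via the elementarity of $j^+$, and to check that $\P$ (rather than $j^+(\P)$) can serve as an adequate witness in the target model. First I would recall that in $\S$ the collection $\Gamma^* = (F_{od})^\S$ is a qsjs for $S(\Gamma^*)$ witnessed by $\P = (\M_\infty)^\S$: closure of $F_{od}^\S$ under Boolean operations and scale-generation is routine under $AD^+ + \Theta = \theta_0$, and the iterability witness follows from \rthm{existence of quasi-iterable premice} together with the direct limit embeddings $\pi^\Sigma_{\Q, \P, \oplus \vec{f}}$. By elementarity of $j^+ : V[g] \to M[h]$, the image $j^+(\Gamma^*)$ is a qsjs for $j^+(S(\Gamma^*))$ in $M[h]$, witnessed by $j^+(\P)$.

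Second, I would verify that $\P$ itself is $(j^+(\Gamma^*), j^+(\Sigma))$-suitable in $M[h]$. Suitability transfers via \rlem{correctness facts}: the operator $\W^{j(\l), j(\Sigma), h}$ computed in $M[h]$ agrees with $\W^{\l, \Sigma, g}$ computed in $V[g]$ on sets in $H_\l^W$, so the defining clauses of suitability for $\P$ (unique Woodin $\d^\P$, the equality $\mathcal{O}_\eta^\P = \W^{\l, \Sigma, g}(\P|\eta)$, and non-Woodinness below via $\W$) are absolute between $V[g]$ and $M[h]$. Hence $\P$ is $(j^+(\Gamma^*), j^+(\Sigma))$-suitable in $M[h]$.

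Third, I would verify strong iterability of $\P$ with respect to each finite $\vec{f} \subseteq j^+[\Gamma^*]$, say $\vec{f} = \la f_{j^+(A_i)} : i < n\ra$ for $A_0, \ldots, A_{n-1} \in OD_\Sigma^\S$. In $\S$, \rthm{existence of quasi-iterable premice} supplies a suitable $\Q$ which is strongly $\oplus \la f_{A_i} : i < n\ra$-iterable, with a direct limit map into $\P$; applying $j^+$ yields that $j^+(\Q)$ is strongly $\oplus \vec{f}$-iterable in $M[h]$, with a direct limit map into $j^+(\P)$. Now, $\P$ embeds into $j^+(\P)$ via $j^+ \restriction \P$, and branch condensation of $j^+(\Sigma)$ (inherited from $\Sigma$ by lifting) forces this embedding to be identifiable with a quasi-iteration map, so the strong iterability of $j^+(\P)$ pulls back to $\P$ along $j^+ \restriction \P$. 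Closure of $j^+[\Gamma^*]$ under complementation and scale-generation is then immediate from the pointwise application of $j^+$ to the corresponding closure of $\Gamma^*$ in $\S$. The main obstacle in this plan is precisely the pullback step: one must carefully verify that $j^+ \restriction \P$ is compatible with the direct limit system of $(\Gamma, j^+(\Sigma))$-correct iterates and that the induced strategy on $\P$ is well-defined and witnesses strong iterability, which relies essentially on branch condensation together with the coherence of direct limit maps under $j^+$.
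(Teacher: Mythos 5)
There is a genuine gap at exactly the step you flag as the main obstacle. Your strategy is to pull back strong iterability of $j^+(\P)$ to $\P$ along $j^+\restriction\P$ by invoking ``branch condensation of $j^+(\Sigma)$ (inherited from $\Sigma$ by lifting)''. But in the context of \rthm{main technical theorem}, the background hod pair is trivial (the $\Sigma$ from the general setup is $\emptyset$ throughout Section 4), and the whole point of the argument surrounding \rlem{getting qsjs} is to \emph{construct} an iteration strategy for $\P$ from scratch: there is no pre-existing branch-condensing strategy for $\P$ to appeal to, so the pullback step has no justification as written. The same issue infects your suitability-transfer step, where you write $\W^{j(\l),j(\Sigma),h}$ and $OD_\Sigma^\S$ -- there is no ambient $\Sigma$ here.

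The paper's proof identifies and handles the two things your outline leaves unaddressed. First, it proves a realizability claim: if $\R\in j(\S)$ admits $\pi:\P\rightarrow\R$ and $\sigma:\R\rightarrow j(\P)$ with $j\rest\P=\sigma\circ\pi$, then $\R\in j^+(S(\Gamma^*))$. This is done not by any condensation of a strategy but by taking the tree $T$ of the universal $(\Sigma^2_1)^{j^+(\S)}$ set, noting $T\in V$, lifting $j\rest\P$, $\pi$, $\sigma$ to embeddings among $L[T,\P]$, $L[\pi^*(T),\R]$, $L[j(T),j(\P)]$, and applying Lemma~2.21 of \cite{StrengthPFA1}. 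Second, and crucially, the paper must show that quasi-iterations guided by the (non-elementarily-defined, not-in-$M[h]$) collection $j^+[\Gamma^*]$ actually produce total direct limit embeddings into suitable models; the heart of this is the observation that $\P=\cup_{B\in j^+[\Gamma^*]}H^\P_B$, so that $\pi=\cup_{B\in j^+[\Gamma^*]}\pi_{\P,\Q,B}$ is total whenever $\Q$ is a $j^+(\Gamma^*)$-quasi iterate of $\P$, whence $j\rest\P=\tau\circ(\sigma^{-1}\circ\pi)$ factors appropriately and the realizability claim applies. Your proposal correctly isolates the elementarity transfer giving that $j^+(\Gamma^*)$ is a qsjs witnessed by $j^+(\P)$, but the reduction from $j^+(\Gamma^*)$ to $j^+[\Gamma^*]$ -- which is the entire content of the lemma -- is not carried out, and the tool you reach for to carry it out does not exist in this setting.
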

\begin{proof} The lemma easily follows from the following claim.\\

\textit{Claim.}  Suppose $\R\in j(\S)$ is such that there are $\pi:\P\rightarrow \R$ and $\sigma:\R\rightarrow j(\P)$ such that $j\rest \P=\sigma\circ \pi$. Then $\R\in S(j^+(\Gamma^*))$. 
\begin{proof}
First let $T\in j^+(\S)$ be the tree projecting to the universal $(\Sigma^2_1)^{j^+(\S)}$ set. We have that $L[T, \P]\models \P=H_{(\d^\P)^{+\omega}}$. Notice that $T\in V$. It then follows that we can lift $j\rest \P$, $\pi$ and $\sigma$ to 
\begin{center}
$j^*:L[T, \P]\rightarrow L[j(T), j(\P)]$, $\pi^*: L[T, \P]\rightarrow L[\pi^*(T), \R]$ and $\sigma^*: L[\pi^*(T), \R]\rightarrow L[j(T), j(\P)]$.
\end{center}
such that $j^*=\sigma^*\circ \pi^*$. The proof of Lemma 2.21 of \cite{StrengthPFA1} now shows that $\R\in j^+(S(\Gamma^*))$.
\end{proof}

To finish the proof, we need to show that for every $A\in \Gamma^*$, in $j^+(\S)$, \\

(1) $\P$ is $(j^+[\Gamma^*], j(A))$-quasi iterable and

(2) whenever $\Q$ is a $j^+[\Gamma^*]$-quasi iterate of $\P$ and $\pi:\R\rightarrow_{\Sigma_1}\Q$ is such that for every $A\in \Gamma^*$, $\tau_{j^+(A)}^\Q\in rng(\pi)$ then $\R\in j^+(\S(\Gamma^*))$.\\

We prove (1) as the proof of (2) is very similar. Fix $A\in \Gamma^*$ and fix $\Q\in S(\Gamma^*)$ such that in $\S$, $\Q$ is $(\Gamma^*, A)$-quasi iterable. Then $j^+(\S)\models ``\Q$ is $(j^+(\Gamma^*), j(A))$-quasi iterable". Since we have that $j^+(\S)\models ``\P$ is a $(j^+(\Gamma^*), j(A))$-quasi iterate of $\Q$", we have that $j^+(\S)\models ``\P$ is a $(j^+(\Gamma^*), j(A))$-quasi iterable". Repeating the argument for every $A$, we get that \\

(3) for every $A\in \Gamma^*$, $j^+(\S)\models ``\P$ is $(j^+(\Gamma^*), j(A))$-quasi iterable".\\

It follows from (3) that to finish the proof of (1) it's enough to show that \\

(4) for every $A\in \Gamma^*$, in $j^+(\S)$, every $(j^+(\Gamma^*), j^+(A))$-quasi iteration is also a $(j^+[\Gamma^*], j^+(A))$-quasi iteration. \\

To prove (4), it is enough to show that whenever $\Q$ is a $j^+(\Gamma^*)$-quasi iterate of $\P$ then $\d^\Q=\cup_{B\in j^+[\Gamma^*]} H^\Q_{\tau_B^\Q}$. Fix then $\Q$ which is a $j^+(\Gamma^*)$-quasi iterate of $\P$. Let $\pi=\cup_{B\in j^+[\Gamma^*]} \pi_{\P, \Q, B}$, $\S$ be the transitive collapse of $\cup_{B\in j^+[\Gamma^*]} H^\Q_{\tau_B^\Q}$, $\sigma:\S\rightarrow \Q$ be the uncollapse map, and $\tau=\cup_{B\in j^+(\Gamma^*)} \pi_{\P, \Q, B}$. Because $\P=\cup_{B\in j^+[\Gamma^*]} H^\P_B$, $\pi$ is total. It then follows that 
\begin{center}
$j\rest \P=\tau\circ (\sigma^{-1}\circ \pi)$.
\end{center}
The claim then implies that $\S\in S(j^+(\Gamma^*))$. This finishes the proof of (1). The proof of (2) is very similar and again the key point is that the embedding $\pi$ defined above is total. 
\end{proof}

We can use Lemma 1.29 of \cite{StrengthPFA1} to get a strategy $\Sigma^*=\Sigma^{j^+[\Gamma^*]}$. In our current situation, there is one important difference with \cite{StrengthPFA1}. In our current context, $\Sigma^*$ may not act on all trees that are in $M[h]$ as $j^+[\Gamma^*]$ isn't in $M[h]$. However, it acts on all stacks that are in $V_\l[k]$. This is simply because 
\begin{center}
$F=\{ B\cap \bR^{V[k]}: B\in j^+[\Gamma^*]\}\in V[k]$. 
\end{center}
We then let $\Sigma=\Sigma^*\rest HC^{V[k]}$. It follows from the proof of Lemma 1.29 of \cite{StrengthPFA1} and \rlem{correctness facts} that, in $V[k]$, $\Sigma$ is a $(\l, \l)$-iteration strategy which is $\powerset(\cup_{\gamma<\lambda}\powerset(\gamma))$-fullness preserving and is guided by $F$. 

Next, we show that for some $\Sigma$-iterate $\Q$ of $\P$ via some stack $\VT$ such that $\pi^{\VT}$-exists, $\Sigma_{\Q, \VT}$ has branch condensation. We follow the proof of branch condensation that first appeared in \cite{Ketchersid} and also in Chapter 7 of \cite{CMI} (see especially the proofs of Lemma 7.9.6 and Lemma 7.9.7 of \cite{CMI}). Below we summarize what we need in order to carry out the proof. First we let $m=k\cap Coll(\omega, <\k)$ and $\Lambda=\Sigma\rest HC^{V[m]}$. We will in fact show that some tail of $\Lambda$ in $V[m]$ has branch condensation. Recall that if $\Psi$ is possibly partial iteration strategy for a suitable premouse $\R$ then we say $\Psi$ has weak-condensation on its domain if whenever $\R^*$ is a $\Psi$-iterate of $\R$ such that the iteration embedding $i:\R\rightarrow \R^*$ exists and $\R^{**}$ is such that there are $\pi:\R\rightarrow \R^{**}$ and $\sigma:\R^{**}\rightarrow \R^*$ with the property that $i=\sigma\circ \pi$ then $\R^{**}$ is suitable. 

Suppose $(R, J)$ is a pair such that $R$ is a transitive set such that for some $R$-cardinal $\nu$, $R\models ``V=H_{\nu^+}+ J$ is a precipitous ideal on $\omega_1"$. We say $(R, J)$ captures $\Lambda$ if in $V[m]$, 
\begin{enumerate}
\item $(R, J)$ is countable and an iterable pair, 
\item $\P\in HC^R$, $\Lambda\rest HC^R\in R$ and letting $\Lambda^R=\Lambda\rest HC^R$, $R\models ``$no tail of $\Lambda^R$ has branch condensation",
\item whenever $\xi<\omega_1$ and $(R_\a, J_\a, G_\a, \pi_{\a, \b} : \a<\b\leq \xi)$ is some iteration of $(R, J)$ of length $\xi+1$ then $\pi_{0, \xi}(\Lambda^R)$ has weak-condensation and fullness preservation on its domain. 
\end{enumerate}

The main lemma towards showing that some tail of $\Lambda$ has branch condensation is that

\begin{lemma}\label{no bad pair} In $V[m]$, there is no $(R, J)$ which captures $\Lambda$. 
\end{lemma}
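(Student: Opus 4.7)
The plan is to assume toward contradiction that some pair $(R, J) \in V[m]$ captures $\Lambda$ and to derive a contradiction by forming a generic iteration of $(R,J)$ of length $\omega_1^{V[m]}$ and then realizing the image of $\Lambda^R$ inside $V[m]$. The overall shape of the argument follows the proofs of Lemma 7.9.6 and Lemma 7.9.7 in \cite{CMI} and Ketchersid's original argument in \cite{Ketchersid}.

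Using that $J$ is precipitous, I would build in $V[m]$ a generic iteration $\langle R_\alpha, J_\alpha, G_\alpha, \pi_{\alpha,\beta} : \alpha \leq \beta \leq \omega_1^{V[m]} \rangle$ whose direct limit $R_{\omega_1}$ is wellfounded. By condition 3 of the definition of capturing, every $\pi_{0,\alpha}(\Lambda^R)$ retains weak-condensation and fullness preservation on its domain, and this passes to $\Lambda^{*} := \pi_{0,\omega_1}(\Lambda^R)$. By elementarity, $R_{\omega_1} \models$ ``no tail of $\Lambda^*$ has branch condensation''. Since $\Lambda^*$ now lives in $V[m]$ and acts on countable structures there, the strategy is to contradict this statement by showing that in $V[m]$ some tail of $\Lambda^*$ does have branch condensation. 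Accordingly, I would take any putative bad witness: a $\Lambda^*$-iterate $\Q$ of $\pi_{0,\omega_1}(\P^R)$ with iteration embedding $i$, together with $\pi : \pi_{0,\omega_1}(\P^R) \to \R$ and $\sigma : \R \to \Q$ satisfying $i = \sigma \circ \pi$. Weak-condensation gives that $\R$ is suitable, and fullness preservation of $\Lambda^*$ makes it possible to compare $\R$ with $\P$ inside $V[m]$ using $\Lambda$ on the $\P$-side.

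The key then is that $\Sigma$ (and so its restriction $\Lambda$) was built from the qsjs $j^+[\Gamma^*]$ produced in \rlem{getting qsjs} via Lemma 1.29 of \cite{StrengthPFA1}, and hence has branch condensation on the hulls $H_A^{(\cdot)}$ corresponding to the guiding sets $A \in j^+[\Gamma^*]$. Comparing $\R$ with $\P$ under $\Lambda$ and $\Lambda^*$ therefore forces the branch picked out by the $\pi$-copy of the stack leading to $\Q$ to agree with the $\Lambda^*$-branch, yielding branch condensation for a tail of $\Lambda^*$ and the desired contradiction. The main obstacle is precisely this alignment step: one must use \rlem{correctness facts} together with the hypothesis that $\mu$ is $\lambda$-strong to ensure that $\Lambda$ is correct about the fullness of the countable structures involved in the comparison, so that the factor map $\sigma$ can be realized as a factor of an actual $\Lambda$-iteration inside $V[m]$ and the qsjs-guidedness of $\Sigma$ can be transported down to $\Lambda^*$.
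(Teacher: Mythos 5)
The paper does not give a proof of this lemma at all; it explicitly defers to Ketchersid's thesis and Chapter~7 of \cite{CMI}, so there is no in-paper proof to match yours against. That said, your opening moves do match the cited argument: generically iterate $(R,J)$ to length $\omega_1^{V[m]}$ with wellfounded direct limit $R_{\omega_1}$, push $\Lambda^R$ to $\Lambda^* = \pi_{0,\omega_1}(\Lambda^R)$, and exploit condition~3 of capturing to keep weak-condensation and fullness preservation while elementarity transports ``no tail of $\Lambda^R$ has branch condensation'' to $\Lambda^*$, which is now total on $HC^{V[m]}$ since $\omega_1^{R_{\omega_1}} = \omega_1^{V[m]}$. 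This much is right and is the core of the Ketchersid/CMI approach.

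Where your sketch goes wrong is the engine that is supposed to produce the contradiction. You assert that $\Sigma$, being built from the qsjs $j^+[\Gamma^*]$, ``has branch condensation on the hulls $H_A^{(\cdot)}$.'' This conflates two different things. What qsjs-guidedness buys you is that the restricted embeddings $\pi^\Sigma_{\P,\Q,f}\restriction H_f^\P$ are well-defined and commute, and that $\Sigma$ computes $Q$-structures correctly (fullness preservation) and satisfies weak condensation. It does \emph{not} give branch condensation — that is precisely the property that the entire bad-pair apparatus exists to establish, and Lemma~3.8 explicitly works under the hypothesis that no tail of $\Lambda$ has it. Your proposed closing step — compare $\R$ with $\P$ under $\Lambda$ and $\Lambda^*$, and read branch condensation off the comparison — would, if weak-condensation plus fullness preservation plus qsjs-guidedness sufficed for that comparison, give branch condensation for $\Lambda$ directly without any generic iteration of $(R,J)$, making both Lemma~3.7 and Lemma~3.8 superfluous. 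The actual mechanism in the cited argument is a more delicate stabilization/zipper-style construction along the generic iteration that threads the counterexamples produced at each $R_\alpha$ and uses wellfoundedness of $R_{\omega_1}$ to force them to cohere; your sketch does not supply that step, and the phrase ``the branch picked out by the $\pi$-copy of the stack leading to $\Q$'' does not parse, since the witness $\pi:\P\to\R$ in the failure of branch condensation is just a factor map, not one induced by any stack.

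Also, the invocation of \rlem{correctness facts} and the $\l$-strongness of $\mu$ in your final sentence is misplaced here: those facts are used elsewhere in the section (to see $\Sigma$ is fullness preserving in $V[g]$ and to define $\S^\l_{\k,m}$), but they do not repair the missing step above.
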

We do not give the proof of the lemma as it can be found in \cite{Ketchersid} and in Chapter 7 of \cite{CMI}. We then derive a contradiction by showing that

\begin{lemma} In $V[m]$, there is a pair $(R, J)$ which captures $\Lambda$ and $R\models ``$no tail of $\Lambda^R$ has branch condensation". 
\end{lemma}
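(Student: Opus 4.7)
The plan is to take $R$ as the transitive collapse of a countable elementary substructure of $H_\theta^{V[m]}$ containing $\Lambda$ and the data used to construct $\Sigma$, and to let $J$ be the pullback of a precipitous ideal on $\omega_1^{V[m]}=\kappa$ in $V[m]$. (Recall $\kappa=\omega_1^{V[m]}$ since $m\subseteq\mathrm{Coll}(\omega,<\kappa)$.)

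First, since $\kappa$ is $\lambda$-strong in $V$, a standard construction using the $(\kappa,\lambda)$-extender witnessing strongness together with the homogeneity of $\mathrm{Coll}(\omega,<\kappa)$ yields a precipitous ideal $J^\ast$ on $\omega_1^{V[m]}$ in $V[m]$ whose generic embeddings lift the strongness embedding and in particular see $\Lambda$ and the family $F=\{B\cap\bR^{V[k]}:B\in j^+[\Gamma^\ast]\}$ that guides $\Sigma$. Now fix a sufficiently large regular $\theta$ and let $X\prec H_\theta^{V[m]}$ be countable with $\{\P,\,\Lambda\restriction H_\theta^{V[m]},\,J^\ast,\,F\}\subseteq X$. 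Let $\pi:R\to X$ be the anticollapse, $\nu=\pi^{-1}(\theta)$, and $J=\pi^{-1}(J^\ast)$. Then $(R,J)$ is countable in $V[m]$, $R\models$``$V=H_{\nu^+}+J$ is a precipitous ideal on $\omega_1$'', $\P\in HC^R$, and $\Lambda^R:=\Lambda\restriction HC^R\in R$ because $X$ already sees $\Lambda\restriction H_\theta^{V[m]}$.

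It remains to verify the three clauses in the definition of capturing. Iterability of $(R,J)$ in $V[m]$ follows the standard route: any generic iteration of $(R,J)$ lifts via $\pi$ to a generic iteration of $(H_\theta^{V[m]},J^\ast)$ whose wellfoundedness is immediate from precipitousness of $J^\ast$. Clause~(2), that $R\models$``no tail of $\Lambda^R$ has branch condensation'', follows by elementarity of $\pi$ from the standing contradiction hypothesis that no tail of $\Lambda$ in $V[m]$ has branch condensation. For clause~(3), given an $R$-iteration $(R_\alpha,J_\alpha,G_\alpha,\pi_{\alpha,\beta})_{\alpha<\beta\leq\xi}$, lifting it via $\pi$ identifies $\pi_{0,\xi}(\Lambda^R)$ with the restriction to $HC^{R_\xi}$ of the image of $\Lambda$ under the lifted generic embedding in $V[m]$; the weak-condensation and $\powerset(\cup_{\gamma<\lambda}\powerset(\gamma))$-fullness preservation of that image then follow from the corresponding properties of $\Sigma$ established earlier ($F$-guidedness plus \rlem{correctness facts} give fullness preservation, and the qsjs property proved in \rlem{getting qsjs} gives weak-condensation).

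The main obstacle will be clause~(3): one must verify the identification of $\pi_{0,\xi}(\Lambda^R)$ with the restriction of the lifted image of $\Lambda$ to $HC^{R_\xi}$. This is the analogue of the corresponding absoluteness step in the proofs of Lemmas~7.9.6 and~7.9.7 of \cite{CMI}, where one shows that generic-ultrapower images of $\Lambda$ agree with the actual tails of $\Lambda$ on the relevant countable fragments; care is needed to check that the version of this absoluteness argument available in the present hybrid $\Sigma$-mouse context indeed transfers the fullness preservation of $\Sigma$ (proved in the larger extension $V[k]$) down to the $R$-iterates of $\Lambda^R$ accessible from $V[m]$. Everything else reduces to elementarity of $\pi$, precipitousness of $J^\ast$, or the properties of $\Sigma$ already in hand.
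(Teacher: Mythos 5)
Your overall shape matches the paper---countable hull, precipitous ideal on $\omega_1^R$ living inside $R$, elementarity of the anticollapse for clause~(2)---but the way you produce $J$ and the way you propose to verify clause~(3) both miss the mechanism that actually makes the lemma go through.

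The paper does not take $J$ to be the preimage of a precipitous ideal $J^*$ chosen in $V[m]$. Instead it puts a $\kappa$-complete normal measure $\mu$ on $\kappa$ (together with $\Q=\M_\infty^{\S^\lambda_{\kappa,m}}$, the iteration map $\pi:\P\to\Q$, and the tree $T$ of the universal $(\Sigma^2_1)^{\S^\lambda_{\kappa,m}}$-set) into $\mathrm{rng}(\sigma)$, and then lets $J$ be the precipitous ideal on $\omega_1^R$ induced \emph{inside} $R=\bar R[\bar m]$ by the collapsed measure $\bar\mu$ (Jech's theorem on the L\'evy collapse of a measurable). This choice is essential: with this $J$, every generic iteration $(R_\alpha,J_\alpha,G_\alpha,\pi_{\alpha,\beta})_{\alpha\leq\xi}$ of $(R,J)$ restricted to $\bar R$ is literally an iteration of $\bar R$ by $\bar\mu$, and hence realizes into $V$ via a map $l:\pi_{0,\xi}(\bar R)\to H_{\nu^+}^V$ with $\sigma\rest\bar R=l\circ(\pi_{0,\xi}\rest\bar R)$. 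Your $J=\pi^{-1}(J^*)$ comes with no such realization; moreover a countable generic iteration of $(R,J)$ does not ``lift via $\pi$ to a generic iteration of $(H_\theta^{V[m]},J^*)$,'' since the generics needed to iterate the uncountable structure $(H_\theta^{V[m]},J^*)$ countably many times are not available in $V[m]$.

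Consequently the verification of clause~(3) is missing. With the realization $l$ in hand, the paper shows (a) weak condensation and fullness preservation of $\pi_{0,\xi}(\Lambda^R)$ by composing the map $p:\R\to\pi_{0,\xi}(\Q^R)$ with $q=l\rest\pi_{0,\xi}(\Q^R)$ to land in $\Q$ and then appealing to weak condensation of $\Lambda$ in $V[m]$; and (b) that iterations according to $\pi_{0,\xi}(\Lambda^R)$ are correctly guided, by pushing $Q$-structures through $p[\bar T_n]$ (respectively $p[\bar T_r]$ for trees with fatal drops) and then through $l$ into $p[T_n]$ (resp.\ $p[T_r]$). None of the needed objects $\Q$, $\pi$, $T$, $T_n$, $T_r$, $\mu$ appear in your hull, and ``lifting identifies $\pi_{0,\xi}(\Lambda^R)$ with the restriction of the image of $\Lambda$'' is not an argument: without realizing $R_\xi$ into the genuine $H_{\nu^+}^V$ there is no way to compare the $Q$-structures $\pi_{0,\xi}(\Lambda^R)$ sees with the actual $\W^{\lambda,m}$-certified ones, which is precisely what clause~(3) demands. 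You flag that ``care is needed'' at this point, but the missing care is the entire realization-plus-tree argument, which is the substance of the proof.
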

\begin{proof}
Recall that it follows from (*), \rthm{the cmi theorem} and \rlem{lifting cmi operators through strongness embeddings} that $\S^\l_{\k, m}\models AD^++\theta_0=\Theta$. Let then $\Q=\M_\infty^{\S^\l_{\k, m}}$. We claim that $\Q=\M_\infty(\P, \Lambda)$. This follows from the fact that $j^+(\S)\models ``\Q$ is a $F_{od}$-quasi iterate of $\P$",  from (1) and (2) of \rlem{getting qsjs} and from the fact that $\Lambda$ is $F$-guided. It now follows that $V\models \card{\Q}<\k^+$.

To finish let $\pi:\P\rightarrow \Q$ be the iteration map according to $\Lambda$. We also let $T$ be the tree of the universal $({\Sigma}^2_1)^{\S^\l_{\k, m}}$-set, $\nu=((2^\k)^+)^V$ and $\mu$ be a $\k$-complete normal measure on $\k$. Working in $V[m]$, let $\sigma: R\rightarrow (H_{\nu^+})^V[m]$ be such that $R$ is countable and $\{\Lambda, \Q, \pi, T, \mu\}\in rng(\sigma)$. Let $n\in \omega$ be such that $T_n$ projects onto $\{ (x, \M): x\in \bR^{V[m]} \wedge \M\insegeq \W^{\l, m}(x)\wedge \rho(\M)=x\}$. Also let $r\in \omega$ be such that $T_r$ projects to the set of $(x, y, z)$ such that $x$ codes an swo $X$, $y$ codes an $\M\inseg \W^{\l, m}(X)$ such that $\rho(\M)=X$ and $z$ is a tree on $\M$ according to the unique iteration strategy of $\M$.

Let then $\{ \bar{\Lambda}, \bar{\Q}, \bar{\pi}, \bar{T}, \bar{\mu}\}=\sigma^{-1}(\{ \Lambda, \Q, \pi, T, \mu\})$, $\bar{R}=\sigma^{-1}((H_{\nu^+})^V)$ and $\bar{m}=\sigma^{-1}(m)$. We then have that $R=\bar{R}[\bar{m}]$. Let then $J\in R$ be the precipitous ideal on $\omega_1$ induced by $\bar{\mu}$. (see Theorem 22.33 \cite{Jech}). 

Suppose now that no tail of $\Lambda$ has branch condensation. It then follows by elementarity of $\sigma$ that $R\models ``$no tail of $\sigma^{-1}(\Lambda)$ has branch condensation". Since we already know that in $V[m]$, $(R, J)$ is countable and iterable, to finish, it remains to show that the $(R, J)$ captures $\Lambda$. 

Let then $\Lambda^R=\Lambda\rest HC^R=\sigma^{-1}(\Lambda)$, $\Q^R=\sigma^{-1}(\Q)$ and $\pi^R=\sigma^{-1}(\pi)$. Notice that by the construction of $\Lambda$ we have that whenever $\R$ is a $\Lambda$-iterate of $\P$ via $\VT$ such that the iteration embedding $\pi^\VT$-exists then $\M_\infty(\R, \Lambda_{\R, \VT})=\Q$ and letting $\pi_{\R, \Q}$ be the iteration map, $\pi=\pi_{\R, \Q}\circ \pi^\VT$. We then have that\\

(1) $R\models $``whenever $\R$ is a $\Lambda^R$-iterate of $\P$ via $\VT$ such that the iteration embedding $\pi^\VT$-exists then $\M_\infty(\R, \Lambda^R)=\Q^R$ and letting $\pi_{\R, \Q^R}$ be the iteration map, $\pi^R=\pi_{\R, \Q^R}\circ \pi^\VT$".\\

To show that $(R, J)$ captures $\Lambda$, let $(R_\a, J_\a, G_\a, \pi_{\a, \b} : \a<\b\leq \xi)$ be some iteration of $(R, J)$ of length $\xi+1$. Let $\VT\in HC^{R_\xi}$ be according to $\pi_{0, \xi}(\Lambda^{R})$ with last model $\R$ such that $\pi^{\VT}$-exists. We need to show that $\S^{\l}_{\k, m}\models ``\R$ is $\utilde{\Sigma}^2_1$-suitable". By (1), we have that there is $p:\R\rightarrow \pi_{0, \xi}(\Q)$ such that $\pi_{0, \xi}(\pi^R)=p\circ \pi^{\VT}$.

It follows from the construction of $J$ that $\pi_{0, \xi}\rest \bar{R}$ is actually an iteration of $\bar{R}$ via $\bar{\mu}$ and hence, there is $l: \pi_{0, \xi}(\bar{R})\rightarrow H_{\nu^+}^V$ such that $\sigma\rest \bar{R}=l\circ (\pi_{0, \xi} \rest \bar{R})$. It then follows that there is $q: \pi_{0, \xi}(\bar{R})\rightarrow (H_{\nu^+})^V$ such that $\sigma\rest \bar{R}=q\circ (\pi_{0, \xi}\rest \bar{R})$. We then have that $\pi=(q\rest (\pi_{0, \xi}\rest \Q^R) )\circ p \circ \pi^\VT$, implying that, by weak condensation of $\Lambda$, that $\S^{\l}_{\k, m}\models ``\R$ is $\utilde{\Sigma}^2_1$-suitable". The proof that $\pi_{0, \xi}(\Lambda^R)$ has weak branch condensation is very similar and we omit it.

It remains to show that iterations according to $\pi_{0, \xi}(\Lambda^R)$ are correctly guided. We do this only for normal trees as the general case is only notationally more complicated. To show this, we first consider the case of trees that don't have fatal drops. Notice that if $\T\in HC^{V[m]}$ is a correctly guided tree\footnote{Recall that correctly guided trees do not have fatal drops, see the paragraph before Definition 1.11 of \cite{StrengthPFA1}.} which is according to $\Lambda$ and letting $b=\Lambda(\T)$, $\Q(b, \T)$-exists then whenever $x, y\in \bR^{V[m]}$ are such that $x$ codes $\M(\T)$ and $y$ codes $\Q(b, \T)$ then $(x, y)\in p[T_n]$. We then have that\\

(2) if $\T\in HC^{R}$ is according to $\Lambda^R$, is correctly guided and letting $b=\Lambda^R(\T)$, $\Q(b, \T)$-exists then whenever $x, y\in \bR^{R}$ are such that $x$ codes $\M(\T)$ and $y$ codes $\Q(b, \T)$ then $(x, y)\in p[\bar{T}_n]$.\\

Let now $\T\in HC^{R_\xi}$ be according to $\pi_{0, \xi}(\Lambda^R)$ and such that it is correctly guided and if $b=\pi_{0, \xi}(\Lambda^R)(\T)$ then $\Q(b, \T)$-exists. Let $x, y\in \bR^{R_\xi}$ be such that $x$ codes $\M(\T)$ and $y$ codes $\Q(b, \T)$. By (2) we have that $(x, y)\in p[\pi_{0, \xi}(\bar{T}_n)]$. Keeping the above notation, we have that $(x, y)\in p[l\circ \pi_{0, \xi}(\bar{T}_n)]=p[T_n]$ implying that $\Q(b, \T)\insegeq \W^{\l, m}(\M(\T))$. 

Lastly we need to take care of trees with fatal drops. Notice that if $\T\in HC^{V[m]}$ is a tree which has a fatal drop at $(\a, \eta)$ then letting $\U$ be the tail of $\T$ after stage $\a$ on $\mathcal{O}^{\M^\T_\a}_\eta$ and letting $\M\insegeq \mathcal{O}^{\M^\T_\a}_\eta$ be the least such that $\rho(\M)=\eta$ and $U$ is a tree on $\M$ above $\eta$ then whenever $x, y, z\in \bR^{V[m]}$ are such that $x$ codes $\M_\a^\T|\eta$, $y$ codes $\M$ and $z$  codes $\U$ then $(x, y, z)\in p[T_r]$. It then follows that\\

(3)  if $\T\in HC^{R}$ is a tree which has a fatal drop at $(\a, \eta)$ then letting $\U$ be the tail of $\T$ after stage $\a$ on $\mathcal{O}^{\M^\T_\a}_\eta$ and letting $\M\insegeq \mathcal{O}^{\M^\T_\a}_\eta$ be the least such that $\rho(\M)=\eta$ and $\U$ is a tree on $\M$ above $\eta$ then whenever $x, y, z\in \bR^{R}$ are such that $x$ codes $\M_\a^\T|\eta$, $y$ codes $\M$ and $z$  codes $\U$ then $(x, y, z)\in p[\bar{T}_r]$. \\

The rest of the proof is just like the proof of the case when $\T$ doesn't have a fatal drop except we now use (3) instead of (2). 
\end{proof}

By the lemma above we can fix a $\Lambda$-iterate $\Q$ of $\P$ via some stack $\VT$ such that $\Lambda_{\Q, \VT}$ has branch condensation. Using \rthm{the cmi theorem} and \rlem{lifting cmi operators through strongness embeddings} we get that in $V[m]$, $L(\Lambda_{\Q, \VT}, \bR)\models AD^+$. Because $\Lambda$ is fullness preserving we must have that $L(\Lambda_{\Q, \VT}, \bR)\models AD^++\theta_0<\Theta$ contradicting (*). 

\section{On the strength of the failure of the UBH for tame trees}

In this section, we present the proof of our Main Theorem. For the rest of this section we assume that there is a proper class of strong cardinals. We start by introducing \textit{tame trees}. Recall that we say $\k$ reflects the set of strong cardinals if whenever $\l\geq \k$ is strong then there is an extender $E$ witnessing that $\k$ is $\l$-strong and such that $Ult(V, E)\models ``\l$ is strong".  

\begin{definition}[Tame iteration tree]\label{tame iteration tree} A normal iteration tree $\T$ on $V$ is tame if for all $\a<\b<lh(\T)$ such that $\a=pred_T \b+1$,  $\M_{\a}^\T\models ``\exists \k<\l<\cp(E_\b^\T)$ such that $\l$ is a strong cardinal and $\k$ is strong reflecting strongs". 
\end{definition}

While our proof will not need the assumption that $\k$ is strong reflecting strongs, we defined tame trees in this particular way because we believe tame failures of $UBH$ give inner models of $AD_{\mathbb{R}}+``\Theta$ is regular". The proof of this claim will appear in a future publication.  In the proof below we will only use that there are three strong cardinals below the critical points of the branch embeddings of $\T$. 

Towards a contradiction, we assume that there is a tame iteration tree $\T$ on $V$ with two cofinal well-founded branches $b$ and $c$. Without loss of generality, we assume that $lh(\T)$ is the least possible. Let $M_b=\M^\T_b$, $M_c=\M^\T_c$, $M=\M(\T)$, $\d=\d(\T)$, $\d^+_b = (\d^+)^{M_b}$, $\d^+_c = (\d^+)^{M_c}$, $\pi_b=\pi^\T_{b}$ and $\pi_c=\pi^\T_{c}$. Also, let $\k_0<\k_1<\k_2$ be the first three strong cardinals of $V$. 

Since $\T$ is tame, we have that all the extenders used in $\T$ have critical point $>\k_2$. Suppose $g\subseteq Coll(\omega, <\k_1)$ is $V$-generic. To make the notation as transparent as possible, we will confuse our iteration embeddings that act on $V$ with their extensions that act on $V[g]$. Thus, for instance, $\pi_b:V[g]\rightarrow M_b[g]$ and etc. Working in $V[g]$, fix a hod pair $(\P, \Sigma)\in V[g]$ below $\omega_2$ such that $\P\in HC^{V[g]}$ and $\l^\P=0$. The next lemma is the key lemma. 

\begin{lemma}[Key Lemma] For every hod pair $(\P, \Sigma)$ below $\k$ such that $\l^\P=0$, Proj$(\k_1, \k_2, \Sigma)$ holds. 
\end{lemma}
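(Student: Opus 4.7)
The plan is to split Proj$(\k_1,\k_2,\Sigma)$ into its two clauses. Clause (b) is handed to us directly by \rlem{lifting cmi operators through strongness embeddings} applied to the strong cardinal $\k_1$ (in particular $\k_2$-strong in $V$), so I would focus entirely on clause (a): given a $V$-generic $g\subseteq Coll(\omega,<\k_1)$ and a $\Sigma^g$-cmi operator $F\in V[g]$ that is $\k_2$-extendable, I must produce $\M_1^{\#,F}$ together with a $\k_2$-extendable $\k_2$-iteration strategy for it. This is exactly where the two cofinal well-founded branches of the tame tree $\T$ enter essentially.

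Since every extender used along $\T$ has critical point above $\k_2$, the branch embeddings lift to $\pi_b,\pi_c:V[g]\to M_b[g],M_c[g]$ with $\pi_b\rest V_{\k_2+1}^{V[g]}=\pi_c\rest V_{\k_2+1}^{V[g]}=\mathrm{id}$. In particular $F\in M_b[g]\cap M_c[g]$, and $\pi_b(\Sigma^g)$, $\pi_c(\Sigma^g)$, $\Sigma^g$ all agree on $HC^{V[g]}$. Standard iteration-tree theory for cofinal well-founded branches gives that $\d=\d(\T)$ is Woodin in both $M_b$ and $M_c$; since $g$ lies in $V_{\k_1+1}$ and $\k_1<\d$, $\d$ remains Woodin in $M_b[g]$ and $M_c[g]$.

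In $M_b[g]$ the Woodin cardinal $\d$ supports the usual $L[\vec E,F]$-background construction below $\d$, producing $\M_1^{\#,F}$ as a $\pi_b(\Sigma^g)$-premouse together with its unique $Q$-structure-guided $\d$-iteration strategy $\Sigma_b$. The parallel construction in $M_c[g]$ yields $(\N,\Sigma_c)$. Because $\M_1^{\#,F}$ has size below $\k_2$ and $V_{\k_2}^{V[g]}=V_{\k_2}^{M_b[g]}=V_{\k_2}^{M_c[g]}$, minimality of the sound $F$-closed mouse with one Woodin forces $\M_1^{\#,F}=\N\in V[g]$; the restrictions $\Sigma_b\rest V_{\k_2}^{V[g]}$ and $\Sigma_c\rest V_{\k_2}^{V[g]}$ then lie in $V_{\k_2}^{V[g]}\subseteq V[g]$ and, by the uniqueness of $Q$-structure guided strategies, must coincide. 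Their common restriction is the desired $\k_2$-iteration strategy. For $\k_2$-extendability I would note that in any further $V[g][h]$ with $h\subseteq Coll(\omega,<\k_2)$ the same $Q$-structure definition produces the unique extension required by \rdef{cmi operator}, agreeing with $\Sigma_b^h$ and $\Sigma_c^h$ on their common domains, exactly as in the uniqueness half of the proof of \rlem{lifting cmi operators through strongness embeddings}.

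The main obstacle I anticipate is the correctness step: proving that the $Q$-structures guiding $\Sigma_b$ (and $\Sigma_c$) on iterations of $\M_1^{\#,F}$ actually coincide with those coming from $\W^{\k_2,\Sigma^g,F}$ as computed inside $V[g]$, not merely inside $M_b[g]$ or $M_c[g]$. This needs a variant of \rlem{correctness facts} tailored to the present context, leveraging tameness of $\T$ to keep $\mathrm{cp}(\pi_b),\mathrm{cp}(\pi_c)$ well above $\k_2$ (so that the $\W$-stacks below $\k_2$ are invariant under $\pi_b,\pi_c$) together with clause~(b) of Proj already in hand. Once this correctness is secured, the remainder is bookkeeping in the style of Chapter~7 of \cite{CMI}.
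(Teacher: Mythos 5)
The central step of your argument rests on a false premise: you assert that ``standard iteration-tree theory for cofinal well-founded branches gives that $\d=\d(\T)$ is Woodin in both $M_b$ and $M_c$.'' That is not what the Martin--Steel theorem gives. Theorem~2.2 of \cite{IT} only yields that $\d(\T)$ is a Woodin cardinal \emph{relative to subsets of $\d$ lying in $\powerset(\d)\cap M_b\cap M_c$} --- the common part of the two branch models. It is entirely consistent (and typical) that $\d$ fails to be Woodin in $M_b$ or in $M_c$ individually: Steel's own result, recorded in \rlem{UBHutility}, is merely that $\d$ is singular or measurable in $M_b$, and Woodin's result bounds $\card{\powerset(\d)\cap M_b\cap M_c}$ by $\d$ precisely because that intersection is much smaller than either $\powerset(\d)^{M_b}$ or $\powerset(\d)^{M_c}$. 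Consequently, the direct plan of running an $L[\vec E,F]$-construction below $\d$ inside $M_b[g]$ to produce $\M_1^{\#,F}$ cannot get off the ground: there is no Woodin cardinal there to support it. Observe also that if $\d$ really were Woodin in $M_b$, your argument would never need the second branch $c$ at all, which should be a warning sign --- the failure of UBH has to enter in an essential way.

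The genuine work in the paper's proof is to get a \emph{single object} into the common part $\powerset(\d)\cap M_b\cap M_c$ so that the Martin--Steel relativized Woodinness can be invoked. The argument proceeds by contradiction: if $\M_1^{\#,F}$ fails to exist and be $\k_2$-iterable, one builds the $F$-closed core model $K^F$ up to $\d$ and considers the $Lp^{F_b}$- and $Lp^{F_c}$-closures of $K^F|\d$ computed in $M_b[g]$ and $M_c[g]$ respectively. The technical heart is \rlem{lifting cmi operators: mice}, which shows these stacks are compatible (via a delicate copying argument relying on Lemma~2.6 of \cite{Neeman} to realize a countable hull simultaneously into both $M_b$ and $M_c$ with matching pullback strategies). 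Compatibility puts the relevant stack above $K^F|\d$ into $M_b\cap M_c$, whereupon Theorem~2.2 of \cite{IT} yields that this $Lp$-stack sees $\d$ as Woodin --- contradicting the $1$-smallness of $K^F$. Your proposal entirely omits this compatibility step, which is where the two branches and tameness actually do their work; without it the Woodinness needed for your construction is unavailable.
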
 

Given the Key Lemma we can easily get a contradiction by using \rthm{main technical theorem}. It is then enough to show that the Key Lemma holds which is what we will do in the next few subsections. Towards the proof of the Key Lemma, we fix a hod pair $(\P, \Sigma)$ below $\k$. Since clause b of Proj$(\k_1, \k_2, \Sigma)$ follows from \rlem{correctness facts}, we will only establish clause a. 

In the course of the proof of the Key Lemma we will heavily use the following lemma proved in \cite{steel2002core}.

\begin{lemma}
\label{UBHutility}
\begin{enumerate}
\item (Woodin) The cardinality of $\powerset(\delta)\cap M_b\cap M_c$ is at most $\delta$.
\item (Steel) $\delta$ is singular or measurable in $M_b$ (and in $M_c$).
\end{enumerate}
\end{lemma}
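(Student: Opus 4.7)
The plan is to establish the two clauses separately, following the original strategies of Woodin and Steel. Both arguments exploit the rigidity imposed by having two distinct cofinal well-founded branches of a single iteration tree $\T$, together with the fact that $\M(\T)$ has ordinal height exactly $\delta$ and cardinality at most $\delta$, and that $M_b$ and $M_c$ agree with $\M(\T)$ on bounded subsets of $\delta$.

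For clause~(1) (Woodin), I would begin from the observation that for either branch $e\in\{b,c\}$, every $A\in\powerset(\delta)\cap M_e$ admits a representation $A=\pi^\T_{\alpha,e}(f)(s)$ for some $\alpha\in e$, some $f\in \M^\T_\alpha$, and some finite tuple of seeds $s\in[\delta]^{<\omega}$. Fixing $A\in M_b\cap M_c$, I would compare its $b$-representation with its $c$-representation by pushing both down to the common model $\M^\T_\alpha$ for $\alpha$ below the split of $b$ and $c$, and then use the agreement of $M_b$, $M_c$, and $\M(\T)$ on bounded subsets of $\delta$ to show that $A$ is already computed from a code in $\M(\T)$ together with a finite tuple of seeds below $\delta$. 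Since $\card{\M(\T)}\leq\delta$ and $\card{[\delta]^{<\omega}}=\delta$, the counting step yields $\card{\powerset(\delta)\cap M_b\cap M_c}\leq\delta$.

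For clause~(2) (Steel), I would argue by contradiction: assume $\delta$ is both regular and not measurable in $M_b$. Using the branch $c$ and the cofinal sequence of seed ordinals $\langle \pi^\T_{\gamma,c}(\cp(E^\T_\gamma)):\gamma\in c\rangle$ below $\delta$, define in $V$ a filter $U$ on $\powerset(\delta)\cap M_b$ by declaring $X\in U$ iff a tail of those seed ordinals lies in $X$. Well-foundedness of $c$ makes $U$ well-defined and countably complete; clause~(1) together with regularity of $\delta$ in $M_b$ upgrades $U$ to an $M_b$-ultrafilter on $\powerset(\delta)\cap M_b$. Non-measurability of $\delta$ in $M_b$ must then force $U$ to be coded inside $M_b$ from parameters in $M_b$, and pulling this coding back through $\pi_b$ and comparing with the analogous object derived from $b$ against $c$ produces an elementary conflict incompatible with $b\neq c$ being cofinal branches of the same tree.

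The main obstacle in both clauses is the careful tracking of seed ordinals along the two branches and the verification that $\pi_b$ and $\pi_c$ send critical points cofinally into $\delta$ in a compatible way; this is exactly where the existence of two distinct cofinal well-founded branches of $\T$ is used decisively, and it is the delicate step of identifying the canonical $\M(\T)$-code for a given $A\in M_b\cap M_c$ (in clause~(1)) and the symmetric exploitation of the two branches (in clause~(2)) that carries the real content. Since both clauses are attributed to Woodin and Steel and are proved in detail in \cite{steel2002core}, a complete verification would track the combinatorial arguments there.
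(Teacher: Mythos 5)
The paper does not actually prove this lemma: it is stated as a black box, imported verbatim from \cite{steel2002core} ("the following lemma proved in \cite{steel2002core}"), so there is no in-paper argument to compare against. Your proposal is therefore being measured against Steel's original proofs, and as a sketch it gestures at the right ingredients but omits or misstates the steps that carry the content. For clause (1), the actual mechanism (which this paper itself invokes later, in the proof of Theorem \ref{MaxModel}) is the zipper argument: if $A,B\in \mathrm{ran}(\pi^\T_{\beta,b})\cap \mathrm{ran}(\pi^\T_{\alpha,c})$ agree below the minimum of the two critical points, then by alternately applying the two branch embeddings one recovers $A\cap\xi=B\cap\xi$ for a sequence of $\xi$'s interleaved cofinally in $\delta$, whence $A=B$; the count $\leq\delta$ then comes from the $\delta$-many pairs $(\beta,\alpha)$ and the $\leq\delta$-many bounded subsets of $\delta$ in $\M(\T)$. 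Your version --- that $A$ is "computed from a code in $\M(\T)$ together with a finite tuple of seeds" --- is not how the argument runs: the functions $f$ in the representation $A=\pi^\T_{\alpha,e}(f)(s)$ live in $\M^\T_\alpha$, which is an iterate of $V$ and far larger than $\M(\T)$, and getting the code down into $\M(\T)$ is essentially the thing to be proved. The determination-by-initial-segment claim is the missing idea.

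For clause (2) the gap is more serious. The step "clause (1) together with regularity of $\delta$ in $M_b$ upgrades $U$ to an $M_b$-ultrafilter on $\powerset(\delta)\cap M_b$" is the entire content of Steel's argument and you assert it without justification: an arbitrary $X\in\powerset(\delta)\cap M_b$ need not belong to $M_c$ or to the range of any $\pi^\T_{\gamma,c}$, so it is not clear why the tail filter generated by the $c$-branch seeds decides $X$; this requires the agreement of $M_b$ with $\M(\T)$ on bounded sets together with a Łoś-style reflection along $c$, and regularity of $\delta$ in $M_b$ enters precisely here. Moreover the concluding step is logically inverted: one does not argue that "non-measurability forces $U$ to be coded inside $M_b$" and then extract an elementarity conflict between the branches; rather, one shows directly that (the restriction of) $U$ belongs to $M_b$ and is a $\delta$-complete ultrafilter there, which contradicts the assumption that $\delta$ is regular non-measurable in $M_b$. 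As written, the sketch would not assemble into a proof without importing exactly the arguments of \cite{steel2002core} that the paper (reasonably) chooses to cite rather than reproduce.
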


We will only verify clause a of Proj$(\k_1, \k_2, \Sigma)$ for $\Sigma$-cmi operators defined according to clause 1 of \rdef{cmi operator} as those defined according to clause 2 of \rdef{cmi operator} can be handled in a very similar manner. Lets then fix such a $\Sigma$-cmi operator $F$. Notice that it follows from \rlem{lifting cmi operators through strongness embeddings} that for every $\xi$, both in $M_b[g]$ and in $M_c[g]$, $F$ $\xi$-extendable. We then let $F_b$ and $F_c$ be the two $Ord$-extensions of $F$ in $M_b[g]$ and $M_c[g]$ respectively. 
 
We say $F$ can be \textit{lifted} if  for any $x\in H_{\d^+}^{M_b[g]}\cap H_{\d^+}^{M_c[g]}$, $F_b(x)=F_c(x)$ and $(Lp^{F_b}(x))^{M_b[g]}$ is compatible with $(Lp^{F_c}(x))^{M_c[g]}$ (i.e., one is an initial segment of the other).

We first present a simple lemma which illustrates some of the key ideas that we will use.

\begin{lemma}\label{compatibility of lp's} Suppose $x, \M \in M_b\cap M_c$ are such that $\M$ is a sound $x$-premouse such that $\rho(\M)=x$. Then $\M\insegeq Lp^{M_b}(x)\iff \M\insegeq Lp^{M_c}(x)$.
\end{lemma}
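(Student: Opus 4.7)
The plan is to reduce the biconditional to an absoluteness statement about countable iterability. By symmetry it suffices to prove the forward direction, so assume $\M\insegeq Lp^{M_b}(x)$ and aim for $\M\insegeq Lp^{M_c}(x)$. Since $\M, x\in M_b\cap M_c$, the first-order assertions that $\M$ is sound as an $x$-premouse and that $\rho(\M)=x$ transfer unchanged between $M_b$ and $M_c$. Hence the only real content to verify is that $\M$ is countably iterable in $M_c$.

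Next, I would exploit tameness of $\T$: every extender used in $\T$ has critical point above $\kappa_2$, so both branch embeddings $\pi_b$ and $\pi_c$ fix $V_{\kappa_2}$. This gives $V_{\kappa_2}^V = V_{\kappa_2}^{M_b} = V_{\kappa_2}^{M_c}$, and in particular $H_{\omega_1}^V = H_{\omega_1}^{M_b} = H_{\omega_1}^{M_c}$. Therefore the collection of countable hulls $\pi:\N\to\M$, together with all countable putative iteration trees on such $\N$, is literally the same set of objects in $V$, $M_b$, and $M_c$.

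The main step is then an absoluteness argument for countable iterability of each such $\N$. Because $\N$ is a sound premouse projecting to a countable parameter, its canonical $(\omega_1,\omega_1{+}1)$-iteration strategy is guided by $Q$-structures: at each limit $\lambda$ of a tree $\U$ on $\N$, the strategy picks the branch $b$ whose $Q$-structure $\Q(b,\U\rest\lambda)$ is a sound, countably iterable mouse over $\M(\U\rest\lambda)$ projecting to $\d(\U\rest\lambda)$ or below. This provides a uniformly $\Pi^1_2$-in-$\N$ definition of the strategy on $HC$. Since $H_{\omega_1}$ is the same across $M_b$ and $M_c$, and since by induction on ordinal height each candidate $Q$-structure is countably iterable in $M_b$ iff in $M_c$, the same branches are selected on both sides. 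It follows that $\N$ is countably iterable in $M_c$; quantifying over all countable hulls yields $\M\insegeq Lp^{M_c}(x)$.

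The hard part will be the apparent circularity of the $Q$-structure analysis: whether a candidate $Q$-structure is itself countably iterable is an instance of the very claim being proved. I would resolve this by a simultaneous induction on the pair $(o(\Q), \lh(\U))$, so that each $Q$-structure appearing during an iteration of $\N$ has strictly smaller ordinal height than the object under consideration and thus falls under a previously handled case. A secondary subtlety is when $\M$ itself carries a Woodin cardinal; here the cutpoint structure of $\M$ together with the uniqueness of $Q$-structure-guided branches ensures the same absoluteness conclusion, though one must be careful about trees that cross the top Woodin of $\M$ (if any), which is handled by feeding back the hypothesis $\M\insegeq Lp^{M_b}(x)$ itself.
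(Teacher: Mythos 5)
Your argument follows the same route as the paper's two-line proof: observe that countable hulls of $\M$, and the question of their countable iterability, are absolute between $M_b$ and $M_c$ because both models (and $V$) share $H_{\omega_1}$, the branch embeddings having critical point above $\kappa_2$. You simply make explicit the $Q$-structure/absoluteness analysis that the paper leaves entirely implicit behind the phrase ``by an absoluteness argument''; the one small wrinkle is that your proposed induction on $o(\Q)$ to break the circularity is not quite right as stated, since an iteration tree $\U$ on a countable $\N$ can have $o(\M(\U\rest\lambda))$, and hence $o(\Q(b,\U\rest\lambda))$, larger than $o(\N)$, so the induction parameter needs to be chosen differently (e.g.\ by position in the mouse order), but this is a detail the paper's own proof suppresses.
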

\begin{proof}
Suppose $\N$ is a countable hull of $\M$ in $V$. Then by an absoluteness argument, $\N$ is a countable hull of $\M$ both in $M_b$ and $M_c$. Hence, the claim follows.
\end{proof}

Unfortunately, the lemma doesn't immediately generalize to $F$-mice since the absoluteness used in the proof isn't in general true. Fixing an $\N$ as in the proof which is a countable submodel of $\M\insegeq (Lp^{F_b}(x))^{M_b}$ it is still true that $\N$ can be realized as a countable hull of $(Lp^{F_b}(x))^{M_b}$ and $(Lp^{F_c}(x))^{M_c}$ in $M_b$ and $M_c$ via certain embeddings $j_b:\N\rightarrow \M$ and $j_c:\N\rightarrow \M$ in $M_b$ and $M_c$ respectively: however, it is not clear, in the case $F$ is an iteration strategy, that $F^{j_b}_b$ and $F^{j_c}_c$ (i.e., the pullbacks of $F_b$ and $F_c$) are the same strategies. In order for these two to be the same, we seem to need to use an argument from \cite{Neeman}. 

\begin{lemma}\label{lifting cmi operators: mice} $F$ can be lifted.
\end{lemma}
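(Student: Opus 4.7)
The plan is to follow the structure of Lemma \ref{compatibility of lp's}, replacing the direct absoluteness step for ordinary mice with the $\Q$-structure and branch-uniqueness machinery needed to compare $F_b$ and $F_c$ on a common input. Fix $x \in H_{\d^+}^{M_b[g]} \cap H_{\d^+}^{M_c[g]}$ and, toward a contradiction, assume $(Lp^{F_b}(x))^{M_b[g]}$ is incompatible with $(Lp^{F_c}(x))^{M_c[g]}$. Pick sound $\M_b \insegeq (Lp^{F_b}(x))^{M_b[g]}$ and $\M_c \insegeq (Lp^{F_c}(x))^{M_c[g]}$ with $\rho(\M_\bullet) = x$ witnessing the minimal incompatibility. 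Working in $V[g]$, take a countable $\pi:\bar\M\to\M_b$. As the paragraph preceding the statement observes, $\bar\M$ is realizable as a countable hull of $\M_b$ in $M_b[g]$ via some $j_b$ and as a countable hull of $\M_c$ in $M_c[g]$ via some $j_c$; this yields candidate iteration strategies $\Lambda_b := F_b^{j_b}$ and $\Lambda_c := F_c^{j_c}$ on $\bar\M$, each descending to $V[g]$ by absoluteness of wellfoundedness of iterates of the countable $\bar\M$.

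The central step is to prove $\Lambda_b = \Lambda_c$. When $F$ is a mouse operator, the iteration strategy of any $F$-mouse is guided by $\Q$-structures in $Lp^{\Sigma}$, so both pullbacks pick, on each countable tree $\T'$ on $\bar\M$, the unique cofinal branch whose $\Q$-structure is an initial segment of $Lp^{\Sigma}(\M(\T'))$. This $Lp^{\Sigma}$-stack is computed uniformly in $V[g]$, $M_b[g]$, and $M_c[g]$ by Lemma \ref{compatibility of lp's} applied to $\Sigma$-mice, which forces $\Lambda_b(\T') = \Lambda_c(\T')$. When $F$ is itself an iteration strategy, a disagreement $\Lambda_b(\T') \neq \Lambda_c(\T')$ would produce two distinct cofinal wellfounded branches through a single countable tree $\T' \in V[g]$, each certified by a suitable $\Q$-structure-like witness in $M_b[g]$ or $M_c[g]$; here the argument from \cite{Neeman}, combined with the tameness of $\T$ and the three strong cardinals $\k_0 < \k_1 < \k_2$ lying below the critical points of $\pi_b, \pi_c$, rules this out.

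Once $\Lambda := \Lambda_b = \Lambda_c$ is in place, $\bar\M$ is $\Lambda$-iterable in $V[g]$. Reflecting through $j_b, j_c$ and using absoluteness of countable iterability yields that $\M_b$ and $\M_c$ are both countably $F$-iterable $\Sigma$-mice over $x$, so the usual comparison forces one to be an initial segment of the other, contradicting the assumed incompatibility and giving the $Lp$-compatibility half of the lemma. The equality $F_b(x) = F_c(x)$ then follows by applying the same argument to $\M_b := F_b(x), \M_c := F_c(x)$ and invoking the minimality clause in the defining property of $F$ as a mouse operator.

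The main obstacle is the strategy-agreement step $\Lambda_b = \Lambda_c$ when $F$ is itself an iteration strategy, exactly as flagged in the paragraph before the statement: the two pullbacks are realized through distinct embeddings landing in incompatible wellfounded cores, so no direct $\Pi^1_2$-style absoluteness is available to identify them. The Neeman $\Q$-structure machinery is what converts this into a branch-comparison problem, and what ultimately powers that comparison is the tameness of $\T$ together with the surplus of strong cardinals sitting below the critical points of $\pi_b$ and $\pi_c$.
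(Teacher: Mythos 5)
Your central step---proving $\Lambda_b = \Lambda_c$ directly on a countable hull $\bar\M$ of a single level $\M_b$---is exactly the route the paper flags as problematic in the paragraph immediately preceding the lemma: the two pullback strategies $F_b^{j_b}$ and $F_c^{j_c}$ arise from embeddings landing in mutually incompatible wellfounded models, and no straightforward absoluteness device identifies them. You acknowledge this, but then close the gap by gesture (``the argument from \cite{Neeman}, combined with tameness and the three strong cardinals, rules this out'') without saying what is actually done. That is where all the work lives, and your sketch does not carry it out; in particular, framing it as a branch-uniqueness problem for a countable tree on $\bar\M$, certified by $\Q$-structure-like witnesses, is not how the obstruction is resolved.

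The paper's proof has a different shape. Rather than hulling a single $F$-mouse, one takes a countable $\sigma\colon W\to V$ reflecting the whole configuration $(\T, b, c, \P, F, x)$, arranged so that $\sigma[lh(\U)]$ is cofinal in $lh(\T)$ where $\U=\sigma^{-1}(\T)$. One then chooses a node $\b$ lying on both reflected branches $d=\sigma^{-1}(b)$ and $e=\sigma^{-1}(c)$ with $\cp(\pi^\U_{\b,d}),\cp(\pi^\U_{\b,e})$ above $\sigma^{-1}(\card{\P}^{\M(\T)})$ and above $\sup\{lh(E_\gamma):\gamma<\b\}$. The relevant input from \cite{Neeman} is Lemma 2.6, a copying/realization fact: in a small generic extension $\M_\b^{\sigma\U}[k]$ it provides maps $m\colon M_d\to\M_\b^{\sigma\U}$ and $n\colon M_e\to\M_\b^{\sigma\U}$ factoring the copy map $\sigma_\b$ through the two branch embeddings $\pi^\U_{\b,d}$ and $\pi^\U_{\b,e}$. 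These land the reflected stacks $\R_d=(Lp^{G_d}(y))^{M_d}$ and $\R_e=(Lp^{G_e}(y))^{M_e}$ in the single model $\M_\b^{\sigma\U}[k]$, where $m(G_d)$ and $n(G_e)$ both extend the common operator $H=\sigma_\b(G^*)$. Absoluteness inside $\M_\b^{\sigma\U}[k]$ then produces embeddings $p,q,r$ agreeing on $\Q$, and $\R_d$, $\R_e$ are compared there using the $p$-pullback of $H$. So the comparison is staged in a third model reached by realization along the tree, not obtained by identifying two pullback strategies on a hull of one mouse; the strong cardinals and tameness enter only to guarantee that $F$ and $\P$ sit below the critical points of the branch embeddings (so the relevant data lie in both $rng(\pi_b)$ and $rng(\pi_c)$), not as fuel for a branch-uniqueness argument.
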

\begin{proof} We already know that $F$ can be extended to $F_b$ and $F_c$. It remains to show that whenever $x\in M_b\cap M_c$, $F_b(x)=F_c(x)$ and $(Lp^{F_b}(x))^{M_b}$\footnote{This means that whenever $\pi: (\N,\P^*,x^*) \rightarrow (\M,\P,x)$ is such that $\M\lhd Lp^{F_b}(x)$ and $\N$ is countable transitive, then $\N$ has a unique $\omega_1+1$ $\Lambda$-strategy where $\Lambda$ is such that whenever $\R$ is an iterate of $\N$ and $\U \in \N$ is a tree on $\P^*$ according to $\Lambda$ then $\Lambda(\U) = F(\pi\U)\in \R$.} and $(Lp^{F_c}(x))^{M_c}$ are compatible. We show the second as the first is only a special case of it. Assume towards a contradiction that $(Lp^{F_b}(x))^{M_b}$ and $(Lp^{F_c}(x))^{M_c}$ are not compatible. Let $\S_b=(Lp^{F_b}(x))^{M_b}$ and $\S_c=(Lp^{F_c}(x))^{M_c}$. Fix $\sigma: W\rightarrow V$ such that $W$ is countable, $(\T, b, c, \P, F, x)\in rng(\sigma)$ and if $(\U, d, e, \Q, G, y)= \pi^{-1}(\T, b, c, \P, F, x)$ then $\sigma[lh(\U)]$ is cofinal in $lh(\T)$. Let $\eta=\card{\P}^M$ and let $\a\in b$ is the least such that $\cp(\pi_{\a, b})>\eta$. We then have that $\a\in c$ and $\cp(\pi_{\a, c})>\eta$ (this is because $\eta\in rng(\pi_b)\cap rng(\pi_c)$). We may also assume that $\cp(\pi_{\a, b}), \cp(\pi_{\a, c}) > \sup\{lh(E_\gamma) \ | \ \gamma < \alpha\}$. It then follows that $(\eta, \a)\in rng(\sigma)$. Let $(\nu, \b)=\sigma^{-1}(\eta, \a)$. Also, let $M_d=\M_d^\U$, $M_e=\M_e^\U$, and $(G_d, G_e)=\sigma^{-1}(F_b, F_c)$. We now have that in $W$, $(Lp^{G_d}(y))^{M_d}$ isn't compatible with $(Lp^{G_e}(y))^{M_e}$.

Let $\sigma_\xi: \M_\xi^\U\rightarrow \M_\xi^{\sigma\U}$ be the copy maps. 
By our assumption on $\beta$, it follows from Lemma 2.6 of \cite{Neeman} that $\sigma_\beta \in \M_\b^{\sigma\U}[k]$ where $k$ is a generic for a posest of size smaller than the critical point of any branch embedding starting from $\M_\b^{\sigma\U}$ and there are $m: M_d\rightarrow \M_\b^{\sigma\U}$ and $n: \M_e\rightarrow \M_\b^{\sigma\U}$ such that $\sigma_\b=m\circ \pi^\U_{\b, d}$ and $\sigma_\b=n\circ \pi^\U_{\b, e}$. Let $H = \sigma_\beta(G^*)\in \M_\b^{\sigma\U}$ where $G^* = \sigma^{-1}((\pi^\mathcal{T}_{\alpha,b})^{-1}(F)) = \sigma^{-1}((\pi^\mathcal{T}_{\alpha,c})^{-1}(F)) \in \M^\U_\beta$

Let $\R_d= (Lp^{G_d}(y))^{M_d}$ and $\R_e=(Lp^{G_e}(y))^{M_e}$. Finally, let $\W_d=m(\R_d)$ and $\W_e=n(\R_e)$. Notice that $\sigma_\b\rest \Q=m\rest \Q=n\rest \Q$ and $m(G_d), n(G_e)$ both extend $H$. Working then in $\M_\b^{\sigma\U}[k]$, we can look for maps 
\begin{enumerate}
\item $p:(\Q, G^*)\rightarrow (\sigma_\b(\Q),H)$,
\item $q: (\R_d,G_d)\rightarrow (\W_d,m(G_d))$,
\item $r:(\R_e,G_e)\rightarrow (\W_e,n(G_e))$
\end{enumerate}
such that $p=q\rest \Q=r\rest \Q$ and $q(G) = r(G) = p(G^*) = H$. By absoluteness, there must be such embeddings in $\M_\b^{\sigma\U}[k]$. But now, $\R_d$ and $\R_e$ can be compared in $\M_\b^{\sigma\U}[k]$ as the both are $G^+$-iterable where $G^+$ is $p$-pullback of $H$.
\end{proof}

Next, we show that $M[g]\models ``\M_1^{\#, F}$ exists and is $<\d$-iterable". Suppose not. We then have that $V\models ``\M_1^{\#, F}$ doesn't exist or isn't $\k_2$-iterable".  Without loss of generality, assume $\d^+_b \leq \d^+_c$. By our assumption, the $F$-closed core model $K^F$ derived from a $K^{c, F}$ which is constructed using extenders with critical point $>\k_2$ exists and is 1-small\footnote{Because we are assuming that there are proper class of strong cardinals, if such a $K^{c, F}$ construction reaches a Woodin cardinal then it also reaches $\M_1^{\#, F}$. If then such a $K^{c, F}$ construction reaches $\M_1{\#, F}$ then it must be $\k_2$-iterable as countable submodels of such a $K^{c, F}$ are $\k_2$-iterable.}. We then have that $M_b \vDash o(\pi_b(K^F)) > \delta^+$. By our smallness assumption, 
\begin{center}
$\pi_b(K^F) | (\delta^+)^{M_b} \insegeq Lp^{\pi_b(F)}(K^{\pi_b(F)}|\delta)$. 
\end{center}

The following claim then gives us a contradiction. 

\textit{Claim.} $Lp^{\pi_b(F)}(K^{\pi_b(F)}|\delta) \vDash \delta$ is Woodin.
\begin{proof}
Recall that we assume $(\delta^+)^{M_b} \leq (\delta^+)^{M_c}$. If $F$ is a strategy as in 2 or in 3 of Definition \ref{cmi operator}, this follows from Lemma \ref{lifting cmi operators: mice} and the proof of Theorem 4.1 in \cite{steel2002core}. If $F$ is a first order (hybrid) mouse operator as in 1 of Definition \ref{cmi operator}, then $Lp^{\pi_b(F)}(K^{\pi_b(F)}|\delta) \in M_b\cap M_c$ and hence by Claim 3 in the proof of Theorem 4.1 in \cite{steel2002core} and Theorem 2.2 of \cite{IT}, $Lp^{\pi_b(F)}(K^{\pi_b(F)}|\delta)\vDash \delta$ is Woodin.
\end{proof}

\section{On the strength of $\neg UBH$ without strongs}

It is possible to prove a similar lower bound for $\neg UBH$ by somewhat strengthening the hypothesis yet dropping the assumption that there are proper class of strong cardinals. In this section, we state the result. Its proof is mostly due to the second author and will appear elsewhere. 
 
Given an iteration tree $\T$ of limit length and $\a<lh(\T)$, we let $\T_{\geq \a}$ be $\T$ starting from $\a$ and $\T_{\leq \a}=\T\rest \a+1$.  Similarly, we define $\T_{<\a}$ and $\T_{>\a}$.

\begin{theorem}\label{without strongs} Suppose $\T$ is a normal tree on $V$ with two wellfounded branches $b$ and $c$ such that if $\a=\sup(b\cap c)$ then $\d(\T)\in rng(\pi^\T_{\a, b})\cap rng(\pi^\T_{\a, c})$ and $\T_{\geq \a}\in M_\a^\T$. Then in some homogenous extension of $V$ there is a transitive model $M$ such that $\bR, Ord\subseteq M$ and $M\models ``AD^++\theta_0<\Theta"$. In particular, there is a non-tame mouse.
\end{theorem}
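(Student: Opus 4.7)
Let $\a=\sup(b\cap c)$, $N=\M_\a^\T$, and $\T^*=\T_{\geq\a}$, which by hypothesis lies in $N$. Write $b^*,c^*$ for the cofinal branches of $\T^*$ induced by $b,c$. Pick $\bar\d\in N$ with $\pi^\T_{\a,b}(\bar\d)=\pi^\T_{\a,c}(\bar\d)=\d(\T)$; by elementarity $N\models$ ``$\bar\d$ is Woodin''. The failure of UBH is thus visible to $N$-internal data up to $\bar\d$: the tree $\T^*$ lives in $N$, and the Woodin of the common iterate is pulled back by both branch embeddings to a single ordinal of $N$. The whole game will be played inside $N$, not in $V$.

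My plan is to transport the proof of the Main Theorem (Section~4) into the interior of $N$, with $\bar\d$ and the $N$-internal large cardinal structure below $\bar\d$ playing the role of the strong cardinals of $V$, and with the pair $(b^*,c^*)$ providing the lifting of cmi operators in place of \rlem{lifting cmi operators through strongness embeddings}. Fix appropriate $\bar\mu<\bar\k<\bar\l<\bar\d$ in $N$ and an $N$-generic $g\subseteq Coll(\omega,{<}\bar\k)^N$. For any hod pair $(\P,\Sigma)\in N[g]$ below $\bar\k$ with $\l^\P=0$ and any $\Sigma$-cmi operator $F$, the two branches induce canonical extensions $F_{b^*},F_{c^*}$ acting on $\M_{b^*}[g],\M_{c^*}[g]$. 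The Key Lemma to prove is the exact analogue of \rlem{lifting cmi operators: mice}: $F$ \emph{lifts}, meaning $F_{b^*}(x)=F_{c^*}(x)$ and the corresponding $Lp$-stacks are compatible on common arguments $x$. Its proof is the same countable-hull argument as in Section~4: for first-order mouse operators it reduces to the absoluteness argument of \rlem{compatibility of lp's}; for strategy operators it invokes Neeman's Lemma~2.6 of \cite{Neeman}, with the hypothesis $\d(\T)\in\rge(\pi^\T_{\a,b})\cap\rge(\pi^\T_{\a,c})$ being exactly what guarantees the two realization embeddings of a countable submodel into $\M_{b^*}$ and $\M_{c^*}$ both factor through a single strategy $H=\sigma_\b(G^*)$ attached to a common base object in the tail tree.

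With the Key Lemma in hand, the $K^{c,F}$-smallness argument from the end of Section~4 (using \rlem{UBHutility} inside $N[g]$ and extenders with critical point above $\bar\d$) forces $\d(\T)$ to be Woodin in $Lp^{\pi_b(F)}(K^F|\d(\T))$ unless $\M_1^{\#,F}$ exists and is $\bar\l$-iterable in $N[g]$, establishing clause (a) of Proj$(\bar\k,\bar\l,\Sigma)$; clause (b) follows from an $N$-internal version of \rlem{correctness facts} via homogeneity of $Coll(\omega,{<}\bar\k)^N$. Then \rthm{main technical theorem}, applied inside $N[g]$ with $(\bar\mu,\bar\k,\bar\l)$, yields in a further homogeneous extension a set $A\subseteq\bR$ with $L(A,\bR)\models AD^+ + \theta_0<\Theta$; composing the two collapses gives the desired transitive $M$ over a homogeneous generic extension of $V$. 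The main obstacle, and the reason the full proof is deferred, is that \rthm{main technical theorem} as stated requires two $\bar\l$-strong cardinals in the ambient universe, whereas we assume no strongs in $V$. Accordingly one must re-prove the key ingredients of Section~3 (notably \rlem{lifting cmi operators through strongness embeddings} and the Ketchersid-style \rlem{no bad pair}) with every appeal to a strongness embedding $j$ replaced by the paired action of $\pi^\T_{\a,b}$ and $\pi^\T_{\a,c}$ on objects with common preimage $\bar\d$. Verifying that this internal substitution goes through — in particular, that the capturing-pair argument produces its contradiction using only the two-branch structure of $\T^*$ rather than external strongs — is the technical heart of the argument.
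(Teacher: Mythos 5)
The paper does not actually prove \rthm{without strongs}; it is stated and explicitly deferred (``Its proof is mostly due to the second author and will appear elsewhere''), so there is no canonical paper proof to compare against. The source does, however, contain draft material beyond the visible end of the document — a Theorem with proof (running the core model induction \emph{inside} $\M(\T)[g]$, with a footnote saying this is equivalent to running it inside $\M^\T_{\a}[g]$), together with auxiliary lemmas whose proofs extend cmi operators from $\M(\T)$ to $H_{\d^+}^{M_b}$ and $H_{\d^+}^{M_c}$ not via a strongness embedding but via a covering/ultrapower argument exploiting Steel's dichotomy from \rlem{UBHutility}.2 that $\d(\T)$ is singular or measurable in each of $M_b$, $M_c$. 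The two branches then enter the picture only to establish agreement ($F_b=F_c$ and compatibility of the $Lp$ stacks), exactly as in \rlem{lifting cmi operators: mice}. Your high-level orientation (push the argument inside the tail tree) is in the same spirit, but the core technical substitute you propose — ``replace every appeal to a strongness embedding $j$ by the paired action of $\pi^\T_{\a,b}$ and $\pi^\T_{\a,c}$'' — is not what the draft does and is not obviously available: the paired branch action shows the two extensions agree, it does not by itself produce a total extension of $F$ to $H_{\d^+}^{M_b}$, which is what the singularity/measurability covering argument is for. Invoking \rthm{main technical theorem} directly inside $N$ also won't go through as stated, since it genuinely requires two $\bar\l$-strong cardinals below $\bar\l$; the draft circumvents it by re-running the induction directly in $\M(\T)[g]$ up to $\d(\T)$ rather than appealing to the packaged theorem.

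There is also a concrete error near the start: the claim that ``by elementarity $N\models$ `$\bar\d$ is Woodin' '' is wrong. Elementarity of $\pi^\T_{\a,b}:N\rightarrow M_b$ would give $N\models``\bar\d$ is Woodin'' if and only if $M_b\models``\d(\T)$ is Woodin''; the latter is neither a hypothesis nor a general consequence of the setup. The Woodinness attached to $\d(\T)$ by the two-branch configuration lives in $\M(\T)$ and in suitable $K^c$-style or $Lp$-stacks built in the branch models (this is what \rlem{UBHutility} and the Claim at the end of Section~4, and the leftover ``woodin in the common part'' lemma, are about), not in $M_b$ itself; Woodin's clause \rlem{UBHutility}.1 shows the power set of $\d$ in $M_b\cap M_c$ is severely constrained, which is very much in tension with asserting outright Woodinness of $\d(\T)$ in a branch model. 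That misstatement does not seem to be load-bearing for the rest of your sketch, but it does signal a misidentification of where the key inaccessibility/Woodinness is supposed to live, which is precisely the point that the deferred argument must handle carefully.
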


The hypothesis of \rthm{without strongs} includes, among other trees, alternating chains.

\bibliographystyle{plain}
\bibliography{UBH-1}

\begin{thebibliography}{10}

\bibitem{Jech}
Thomas Jech.
\newblock {\em Set theory}.
\newblock Springer Monographs in Mathematics. Springer-Verlag, Berlin, 2003.
\newblock The third millennium edition, revised and expanded.

\bibitem{Ketchersid}
Richard~O'Neal Ketchersid.
\newblock {\em Toward {AD}({R}) from the continuum hypothesis and an
  omega(1)-dense ideal}.
\newblock ProQuest LLC, Ann Arbor, MI, 2000.
\newblock Thesis (Ph.D.)--University of California, Berkeley.

\bibitem{IT}
Donald~A. Martin and John~R. Steel.
\newblock Iteration trees.
\newblock {\em J. Amer. Math. Soc.}, 7(1):1--73, 1994.

\bibitem{Neeman}
Itay Neeman.
\newblock Inner models in the region of a {W}oodin limit of {W}oodin cardinals.
\newblock {\em Ann. Pure Appl. Logic}, 116(1-3):67--155, 2002.

\bibitem{StrengthPFA1}
Grigor Sargsyan.
\newblock On the strength of {PFA I}.
\newblock {A}vailable at http://math.rutgers.edu/$\sim$gs481.

\bibitem{ATHM}
Grigor Sargsyan.
\newblock A tale of hybrid mice, available at
  http://math.rutgers.edu/$\sim$gs481/.

\bibitem{CMI}
Ralf Schindler and John~R. Steel.
\newblock {\em The core model induction}.
\newblock Available at math.berkeley.edu/$\sim$steel.

\bibitem{PFA}
John~R. Steel.
\newblock P{FA} implies {${\rm AD}\sp {{\rm{L}}(\Bbb R)}$}.
\newblock {\em J. Symbolic Logic}, 70(4):1255--1296, 2005.

\bibitem{K(R)}
John~R. Steel.
\newblock Scales in {${\bf K}(\Bbb R)$}.
\newblock In {\em Games, scales, and {S}uslin cardinals. {T}he {C}abal
  {S}eminar. {V}ol. {I}}, volume~31 of {\em Lect. Notes Log.}, pages 176--208.
  Assoc. Symbol. Logic, Chicago, IL, 2008.

\bibitem{steel2002core}
J.R. Steel.
\newblock Core models with more woodin cardinals.
\newblock {\em The Journal of Symbolic Logic}, 67(3):1197--1226, 2002.

\bibitem{NamScales}
Nam Trang.
\newblock Scales in {$Lp^\Sigma(\mathbb{R})$}.
\newblock {I}n preperation.

\bibitem{woodin2010suitable}
W.H. Woodin.
\newblock Suitable extender models {I}.
\newblock {\em The Journal of Mathematical Logic}, 1(2):101--339, 2010.

\end{thebibliography}
\end{document}